\def\namedlabel#1#2{\begingroup
   \def\@currentlabel{#2}%
   \label{#1}\endgroup
}
\theoremstyle{plain}
\newtheorem{thm}{Theorem}[section]
\newtheorem{lem}[thm]{Lemma}
\newtheorem{prop}[thm]{Proposition}
\theoremstyle{definition}
\newtheorem{defn}[thm]{Definition}
\theoremstyle{remark}
\setlist[enumerate,1]{leftmargin=2em}
\def\C{\mathbb C}
\def\End{{\rm End}}
\def\Mat{{\rm Mat}}
\def\M{\mathcal M}
\def\T{\mathcal T}
\def\F{\mathbb F}
\newcommand{\floor}[1]{\left\lfloor #1 \right\rfloor}
\newcommand{\ceil}[1]{\left\lceil #1 \right\rceil}
\title[The Terwilliger algebra of the halved cube]{The Terwilliger algebra of the halved cube}
\author{Chia-Yi Wen}
\address{
Department of Applied Mathematics\\
National Yang Ming Chiao Tung University\\
Hsinchu 30010 Taiwan
}
\email{cywen.am06g@nctu.edu.tw}
\author{Hau-Wen Huang}
\address{
Department of Mathematics\\
National Central University\\
Chung-Li 32001 Taiwan
}
\email{hauwenh@math.ncu.edu.tw}
\begin{document}
\begin{abstract}
Let $D\geq 3$ denote an integer. For any $x\in \F_2^D$ let $w(x)$ denote the Hamming weight of $x$. Let  $X$ denote the subspace of $\F_2^D$ consisting of all $x\in \F_2^D$ with even $w(x)$. The $D$-dimensional halved cube $\frac{1}{2}H(D,2)$ is a finite simple connected graph with vertex set $X$ and $x,y\in X$ are adjacent if and only if $w(x-y)=2$.
Fix a vertex $x\in X$.
The Terwilliger algebra $\mathcal T=\mathcal T(x)$ of $\frac{1}{2}H(D,2)$ with respect to $x$ is the subalgebra of $\Mat_X(\C)$ generated by the adjacency matrix $A$ and the dual adjacency matrix $A^*=A^*(x)$ where $A^*$ is a diagonal matrix with
$$
A^*_{yy}=D-2w(x-y)
\qquad
\hbox{for all $y\in X$}.
$$
In this paper we decompose the standard $\T$-module into a direct sum of irreducible $\T$-modules.
\end{abstract}

\maketitle

{\footnotesize{\bf Keywords:} halved cubes, Hamming weights, irreducible modules, Terwilliger algebras.}

{\footnotesize{\bf MSC2020:} 05E30, 16D70, 33D45.}

\allowdisplaybreaks

\section{Preliminaries}\label{s:preliminary}
Throughout this paper, we adopt the following conventions: An algebra is meant to be a unital associative algebra. A subalgebra has the same unit as the parent algebra.
Let $\C$ denote the complex number field.
Given a finite set $Y\not=\emptyset$, let $\Mat_Y(\C)$ denote the algebra consisting of the square matrices over $\C$ indexed by $Y$ and let $\C^Y$ denote the vector space consisting of all column vectors over $\C$ indexed by $Y$.

Let $\Gamma$ denote a finite simple connected graph with vertex set $X\not=\emptyset$. Let $\partial$ denote the path-length distance function of $\Gamma$. The {\it diameter} $D$ of $\Gamma$ is defined by
$$
D=\max_{x,y\in X}\partial(x,y).
$$
Given any $x\in X$ let
$$
\Gamma_i(x)=\{y\in X\,|\, \partial(x,y)=i\}
\qquad
\hbox{for $i=0,1,\ldots,D$}.
$$
For short we abbreviate $\Gamma(x)=\Gamma_1(x)$. We call $\Gamma$ {\it distance-regular} whenever for all $h,i,j\in\{0,1,\ldots,D\}$ and all $x,y\in X$ with $\partial(x,y)=h$ the number
$
|\Gamma_i(x)\cap \Gamma_j(y)|
$
is independent of $x$ and $y$. If $\Gamma$ is distance-regular, then the numbers $a_i,b_i,c_i$ for all $i=0,1,\ldots,D$ defined by
$$
a_i = |\Gamma_i(x) \cap \Gamma(y)|,
\qquad
b_i = |\Gamma_{i+1}(x) \cap \Gamma(y)|,
\qquad
c_i = |\Gamma_{i-1}(x) \cap \Gamma(y)|
$$
for any $x,y\in X$ with $\partial(x,y)=i$ are called the {\it intersection numbers} of $\Gamma$. Here $\Gamma_{-1}(x)$ and $\Gamma_{D+1}(x)$ are empty sets.

Now assume that $\Gamma$ is distance-regular. For all $i=0,1,\ldots,D$ the $i^{\,th}$ {\it distance matrix} $A_i$ of $\Gamma$ is a $0$-$1$ matrix in $\Mat_{X}(\C)$ defined by
\begin{equation*}
(A_i)_{xy}=\left\{
  \begin{array}{ll}
   1  & \hbox{if $\partial(x,y)=i$,} \\
   0  & \hbox{if $\partial(x,y)\neq i$}
  \end{array}
\right.
\qquad
\hbox{for all $x,y\in X$}.
\end{equation*}
We abbreviate $A=A_1$ and $A$ is called the {\it adjacency matrix} of $\Gamma$. Let $a_i,b_i,c_i$ ($0\leq i\leq D$) denote the intersection numbers of $\Gamma$. Observe that
\begin{gather*}
A_i A=b_{i-1} A_{i-1}+a_i A_i+c_{i+1} A_{i+1}
\qquad
\hbox{for all $i=0,1,\ldots, D$},
\end{gather*}
where $b_{-1} A_{-1}$ and $c_{D+1} A_{D+1}$ are interpreted as the zero matrix in $\Mat_X(\C)$.
The {\it Bose--Mesner algebra} $\mathcal{M}$ of $\Gamma$ is the subalgebra of $\Mat_X(\C)$ generated by $A$. Note that  the matrices $\{A_i\}_{i=0}^{D}$ form a basis for $\M$. Let $\circ$ denote the entrywise product. Since
$$
A_i\circ A_j=\delta_{ij} A_i
\qquad (0\leq i,j\leq D),
$$
it follows that $\M$ is closed under $\circ$.

Since $A$ is a real symmetric matrix and $\M$ has dimension $D+1$, it follows that $A$ has $D+1$ distinct real eigenvalues $\theta_0,\theta_1,\ldots,\theta_D$. There are unique $E_0,E_1,\ldots,E_D\in \Mat_X(\C)$ satisfying
\begin{align*}
\sum_{i=0}^D E_i=I,
\qquad
A E_i=\theta_i E_i
\qquad
\hbox{for all $i=0,1,\ldots, D$},
\end{align*}
where $I$ denotes the identity matrix in $\Mat_X(\C)$.
Note that $\{E_i\}_{i=0}^D$ form a basis for $\M$.
The element $E_i$ is called the {\it primitive idempotent} of $\Gamma$ associated with $\theta_i$ for $i=0,1,\ldots,D$.

The distance-regular graph $\Gamma$ is said to be {\it $Q$-polynomial} with respect to the ordering $\{E_i\}_{i=0}^D$ if there are unique $a_i^*,b_i^*,c_i^*\in\C$ for all $i=0,1,\ldots,D$ with
$b_D^*=c_0^*=0$, $b_{i-1}^*c_i^*\neq 0$ for all $i=1,2,\ldots,D$
such that
$$
E_i\circ E_1=\frac{1}{|X|}\left(b_{i-1}^*E_{i-1}+a_i^*E_i+c_{i+1}^*E_{i+1}\right)\qquad
\hbox{for all $i=0,1,\ldots,D$},
$$
where $b_{-1}^*E_{-1}$ and $c_{D+1}^*E_{D+1}$ are interpreted as the zero matrix in $\Mat_X(\C)$. If this is the case, then $a_i^*,b_i^*,c_i^*$ for all $i=0,1,\ldots,D$ are called the {\it dual intersection numbers} of $\Gamma$.

Now assume that $\Gamma$ is a $Q$-polynomial distance-regular graph.
Fix a vertex $x\in X$. For $i=0,1,\ldots, D$ the $i^{\, th}$ {\it dual distance matrix} $A_i^*=A_i^*(x)$ of $\Gamma$ respect to $x$ is the diagonal matrix in $\Mat_X(\C)$ defined by
\begin{equation*}
(A_i^*)_{yy}=|X|(E_i)_{xy}\qquad\hbox{for all $y\in X$}.
\end{equation*}
We abbreviate $A^*=A_1^*$ and $A^*$ is called the {\it dual adjacency matrix} of $\Gamma$ with respect to $x$. Let $a_i^*,b_i^*,c_i^*$ $(0\leq i\leq D)$ denote the dual intersection numbers of $\Gamma$. Observe that
\begin{gather*}
A_i^* A^*=b_{i-1}^*A_{i-1}^*+a_i^*A_i^*+c_{i+1}^*A_{i+1}^*
\qquad
\hbox{for all $i=0,1,\ldots,D$},
\end{gather*}
where $b_{-1}^* A_{-1}^*$ and $c_{D+1}^* A_{D+1}^*$ are interpreted as the zero matrix in $\Mat_X(\C)$. The {\it dual Bose--Mesner algebra} $\M^*=\M^*(x)$ of $\Gamma$ with respect to $x$ is the subalgebra of $\Mat_X(\C)$ generated by $A^*$.  Note that $\{A_i^*\}_{i=0}^{D}$ form a basis for $\M^*$.

The {\it Terwilliger algebra} $\T=\T(x)$ of $\Gamma$ with respect to $x$ is the subalgebra of $\Mat_X(\C)$ generated by $\M$ and $\M^*$ \cite{TerAlgebraI,TerAlgebraII,TerAlgebraIII}. The vector space $\C^X$ has a natural $\T$-module structure and it is called the {\it standard} $\T$-module.
Since $\T$ is finite-dimensional the irreducible $\T$-modules are finite-dimensional. Since $\T$ is closed under the conjugate-transpose map the algebra $\T$ is semisimple. Hence the algebra $\T$ is isomorphic to
$$
\bigoplus_{\tiny \hbox{irreducible $\T$-modules $V$}} \End(V),
$$
where the direct sum is over all  non-isomorphic irreducible $\T$-modules $V$. Since the standard $\T$-module $\C^X$ is faithful it follows that $\C^X$ contains all irreducible $\T$-modules up to isomorphism.

The paper is organized as follows:
In \S\ref{s:introduction} we state the main results of this paper on the Terwilliger algebra of the halved cube (Proposition \ref{prop:irreducible Mk} and Theorems \ref{thm:decomposition of halved cube}--\ref{thm:structure of T}).
In \S\ref{s:hypercube} we recall some results concerning the Terwilliger algebra of the hypercube. In \S\ref{sec:standard T module of halved cube} we relate the Terwilliger algebra of the halved cube to a subalgebra of  the Terwilliger algebra of the hypercube. In \S\ref{sec:decomposition} we give the proofs of our main results.

\section{Statement of results}\label{s:introduction}

Let $D\geq 3$ denote an integer.
Let $\F_2$ denote the finite field of order two. Let $\F_2^D$ denote a $D$-ary Cartesian product of $\F_2$.
For any $x\in \F_2^D$ the {\it Hamming weight} $w(x)$ of $x$ is the number of ones in $x$. Let $X$ denote the subspace of $\F_2^D$ consisting of all $x\in \F_2^D$ with even $w(x)$.

\begin{defn}\label{defn:halved cube}
The {\it $D$-dimensional halved cube} $\frac{1}{2}H(D,2)$ is a finite simple connected graph with vertex set $X$ and $x,y\in X$ are adjacent if and only if $w(x-y)=2$.
\end{defn}

Note that $\frac{1}{2}H(D,2)$ is a distance-regular graph of diameter $\floor{\frac{D}{2}}$ whose intersection numbers \cite{BannaiIto1984,DRG_book1989,TerAlgebraIII} are
\begin{equation*}
a_i=2i(D-2i),\qquad
b_i=\binom{D-2i}{2},\qquad
c_i=\binom{2i}{2}
\qquad \hbox{for $i=0,1,\ldots,\floor{\frac{D}{2}}$}.
\end{equation*}
Let $A$ denote the adjacency matrix of $\frac{1}{2}H(D,2)$. The eigenvalues of $A$ are
$$
\theta_i=\frac{1}{2}((D-2i)^2-D)
\qquad
\hbox{for $i=0,1,\ldots,\floor{\frac{D}{2}}$}.
$$
Let $E_i$ denote the primitive idempotent of $\frac{1}{2}H(D,2)$ associated with $\theta_i$ for $i=0,1,\ldots,\floor{\frac{D}{2}}$.

Note that $\frac{1}{2}H(D,2)$ is $Q$-polynomial with respect to the ordering $\{E_i\}_{i=0}^{\floor{\frac{D}{2}}}$ whose dual intersection numbers \cite{BannaiIto1984,TerAlgebraIII} are
\begin{align*}
&a_i^*=0,\qquad b_i^*=D-i,\qquad c_i^*=i
\qquad
\hbox{for $i=0,1,\ldots,\floor{\frac{D}{2}}-1$},
\\
&a_{\floor{\frac{D}{2}}}^*=
\left\{
\begin{array}{ll}
0 \qquad &\hbox{if $D$ is even},
\\
\frac{D+1}{2}
 \qquad &\hbox{if $D$ is odd},
\end{array}
\right.
\qquad 
 b_{\floor{\frac{D}{2}}}^*=0,
\qquad 
c_{\floor{\frac{D}{2}}}^*=
\left\{
\begin{array}{ll}
D \qquad &\hbox{if $D$ is even},
\\
\frac{D-1}{2}
 \qquad &\hbox{if $D$ is odd}.
\end{array}
\right.
\end{align*}

Fix a vertex $x\in X$. Let $A^*=A^*(x)$ denote the dual adjacency matrix of $\frac{1}{2}H(D,2)$ with respect to $x$. The diagonal matrix $A^*$ is given by
$$
A^*_{yy}=D-2w(x-y)
\qquad
\hbox{for all $y\in X$}.
$$
Let $\T=\T(x)$ denote the Terwilliger algebra of $\frac{1}{2}H(D,2)$ with respect to $x$. The main results of this paper are as follows:

\begin{prop}\label{prop:irreducible Mk}
\begin{enumerate}
\item For any even integer $k$ with $0\leq k\leq \floor{\frac{D}{2}}$, there exists a $\left(\floor{\frac{D}{2}}-k+1\right)$-dimensional irreducible $\T$-module $M_k$ that has a basis with respect to which the matrix representing $A$ and $A^*$ are
\begin{gather}\label{eq:irr module even}
\begin{pmatrix}
\alpha_0 &\gamma_1 & &  &{\bf 0}
\\
\beta_0 &\alpha_1 &\gamma_2
\\
&\beta_1 &\alpha_2 &\ddots
 \\
& &\ddots &\ddots &\gamma_{\floor{\frac{D}{2}}-k}
 \\
{\bf 0} & & &\beta_{\floor{\frac{D}{2}}-k-1} &\alpha_{\floor{\frac{D}{2}}-k}
\end{pmatrix},
\qquad
\begin{pmatrix}
\theta_0^* & &  & &{\bf 0}
\\
 &\theta_1^*
\\
 &  &\theta_2^*
 \\
 & & &\ddots
 \\
{\bf 0}  & & & &\theta_{\floor{\frac{D}{2}}-k}^*
\end{pmatrix},
\end{gather}
respectively, where
\begin{align*}
  \alpha_i=&2i(D-2i-2k)-k
   \qquad
   \left(0\leq i\leq \textstyle \floor{\frac{D}{2}}-k\right),
   \\
  \beta_i =& (i+1)(2i+1)
  \qquad
  \left(0\leq i\leq \textstyle \floor{\frac{D}{2}}-k-1\right),
  \\
  \gamma_i =& \frac{1}{2}(D-2i-2k+1) (D-2i-2k+2)
  \qquad
  \left(0\leq i\leq \textstyle \floor{\frac{D}{2}}-k\right),
  \\
  \theta_i^* =& D-2(2i+k)
  \qquad
  \left(0\leq i\leq \textstyle \floor{\frac{D}{2}}-k\right).
\end{align*}
  \item
For any even integer $k$ with $2\leq k\leq \ceil{\frac{D}{2}}$, there exists a $\left(\ceil{\frac{D}{2}}-k+1\right)$-dimensional irreducible $\T$-module $N_k$ that has a basis with respect to which the matrix representing $A$ and $A^*$ are
      \begin{equation}\label{eq:irr module odd}
      \begin{pmatrix}
       \alpha_0 & \gamma_1 &          &        & \bm{0} \\
       \beta_0  & \alpha_1 & \gamma_2 &        &        \\
                & \beta_1  & \alpha_2 & \ddots &        \\
                &          & \ddots   & \ddots & \gamma_{\ceil{\frac{D}{2}}-k} \\
         \bm{0} &          &          & \beta_{\ceil{\frac{D}{2}}-k-1} & \alpha_{\ceil{\frac{D}{2}}-k}
     \end{pmatrix},
     \qquad
    \begin{pmatrix}
       \theta_0^* &  &  &    &  \\
        & \theta_1^* &  &    &  \\
        &  & \theta_2^* &    &  \\
        &  &  & \ddots   &  \\
        &  &  &    & \theta_{\ceil{\frac{D}{2}}-k}^*
     \end{pmatrix},\end{equation}
     respectively, where
\begin{align*}
  \alpha_i=&(2i+1)(D-2i-2k+1)-k+1 \qquad
   \left(0\leq i\leq \textstyle \ceil{\frac{D}{2}}-k\right),\\
  \beta_i =& (i+1)(2i+3) \qquad
   \left(0\leq i\leq \textstyle \ceil{\frac{D}{2}}-k-1\right), \\
  \gamma_i =& \frac{1}{2}(D-2i-2k+2)(D-2i-2k+3) \qquad
   \left(0\leq i\leq \textstyle \ceil{\frac{D}{2}}-k\right), \\
  \theta_i^* =& D-2(2i+k) \qquad
   \left(0\leq i\leq \textstyle \ceil{\frac{D}{2}}-k\right).
\end{align*}
\end{enumerate}
\end{prop}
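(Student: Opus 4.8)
The plan is to deduce everything from the relationship between $\T$ and the Terwilliger algebra of the hypercube developed in \S\ref{s:hypercube}--\S\ref{sec:standard T module of halved cube}. After translating so that the base vertex is $x=0$, the subspace $\C^X\subseteq\C^{\F_2^D}$ is precisely the even Hamming weight part of the standard module of the hypercube $H(D,2)$; the dual adjacency matrix $A^*$ is the restriction to $\C^X$ of the dual adjacency matrix of the hypercube, and the adjacency matrix of the halved cube satisfies $A=\tfrac12\bigl((A^{(H)})^2-DI\bigr)$ on $\C^X$, where $A^{(H)}$ is the adjacency matrix of $H(D,2)$ (two even-weight vertices at Hamming distance $2$ have exactly two common neighbours, while each vertex has $D$ neighbours). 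Consequently $A$ and $A^*$ both lie in the Terwilliger algebra of the hypercube and preserve $\C^X$, so $\T$ acts on $\C^X$ as a subalgebra of that algebra, and I can assemble $\T$-modules from the already-understood irreducible modules of the hypercube.

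First I would recall from \S\ref{s:hypercube} the decomposition of $\C^{\F_2^D}$ into irreducible modules for the hypercube Terwilliger algebra. Each such module $W$ has an endpoint $k_0$ with $0\le k_0\le\floor{\frac D2}$, has dimension $D-2k_0+1$, and (after possibly rescaling the basis vectors) carries a basis $u_0,u_1,\ldots,u_{D-2k_0}$ on which the dual adjacency matrix of the hypercube acts by $u_j\mapsto(D-2k_0-2j)u_j$ and the adjacency matrix acts tridiagonally in Krawtchouk form $A^{(H)}u_j=(D-2k_0-j)\,u_{j+1}+j\,u_{j-1}$. Since $u_j$ has Hamming weight $k_0+j$, the even-weight part $W\cap\C^X$ is the span of those $u_j$ with $j\equiv k_0\pmod2$, and it is invariant under $A^*$ (diagonal) and under $A=\tfrac12((A^{(H)})^2-DI)$ (which preserves weight parity). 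I would then reindex this basis as $w_i=u_{2i}$ when $k_0$ is even and as $w_i=u_{2i+1}$ when $k_0$ is odd, and set $M_k:=W\cap\C^X$ for $W$ of even endpoint $k_0=k$, and $N_k:=W\cap\C^X$ for $W$ of odd endpoint $k_0=k-1$. A count of the surviving indices gives the dimensions $\floor{\frac D2}-k+1$ and $\ceil{\frac D2}-k+1$ in the two cases, uniformly in the parity of $D$.

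The remaining work is computational. Applying $A^{(H)}$ twice and keeping the terms that survive in the reindexed basis shows that $A$ is tridiagonal on $W\cap\C^X$, while $A^*w_i=(D-2(2i+k))w_i=\theta_i^*w_i$; for example the diagonal entry at $w_0$ evaluates to $-k$ in the $M_k$ case and to $D-3k+2$ in the $N_k$ case, matching $\alpha_0$, and the off-diagonal entries reproduce the stated $\beta_i,\gamma_i$. Irreducibility is then immediate: the diagonal matrix $A^*$ has the pairwise distinct eigenvalues $\theta_0^*>\theta_1^*>\cdots$, so its eigenspaces are one-dimensional and any $\T$-submodule is a partial span of the $w_i$; because the subdiagonal entries $\beta_i=(i+1)(2i+1)$ and the superdiagonal entries $\gamma_i$ are all nonzero, $A$ links consecutive $w_i$, forcing any nonzero submodule to be the whole space. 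I expect the main obstacle to be the bookkeeping in this squaring-and-restriction step: one must treat the two parity classes $k_0$ even and $k_0$ odd separately, keep track of the relabeling $k_0=k$ versus $k_0=k-1$, verify the closed forms for $\alpha_i,\beta_i,\gamma_i$ across the full index range, and confirm the dimension counts in both the $D$ even and $D$ odd cases.
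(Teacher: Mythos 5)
Your proposal is correct and follows essentially the same route as the paper: identify $A=\tfrac12(\bm A^2-DI)|_{\C^X}=\bm A_2|_{\C^X}$ and $A^*=\bm A^*|_{\C^X}$, restrict each irreducible hypercube module $L_{k_0}$ to its even-weight part spanned by the $u_j$ with $j\equiv k_0\pmod 2$, compute the resulting tridiagonal action, and deduce irreducibility from the distinct $A^*$-eigenvalues together with the nonvanishing off-diagonal entries. The only cosmetic difference is your choice of the Krawtchouk normalization of the $L_{k_0}$-basis, which yields a matrix diagonally similar to \eqref{eq:irr module even} and \eqref{eq:irr module odd} rather than literally equal to it; a diagonal rescaling (or using the basis of Proposition \ref{prop:irreducible module of hypercube} directly, as the paper does) removes this discrepancy.
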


Given a vector space $V$ and a positive integer $p$, we let
$$
p\cdot V=\underbrace{V\oplus V\oplus \cdots \oplus V}_
{\hbox{{\tiny $p$ copies of $V$}}}.
$$

\begin{thm}\label{thm:decomposition of halved cube}
The standard $\T$-module $\C^{X}$ is isomorphic to
$$
\bigoplus_{\substack{k=0 \\ \hbox{\tiny $k$ is even}}}^{\floor{\frac{D}{2}}}
\left(
\frac{D-2k+1}{D-k+1}{D\choose k}\cdot M_k
\right)
\oplus
\bigoplus_{\substack{k=2 \\ \hbox{\tiny $k$ is even}}}^{\ceil{\frac{D}{2}}}
\left(
\frac{D-2k+3}{D-k+2}{D\choose k-1}\cdot N_k
\right).
$$
\end{thm}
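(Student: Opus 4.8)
The plan is to derive the decomposition from the known decomposition of the standard module of the hypercube $H(D,2)$, using that the halved cube is obtained from $H(D,2)$ by restricting to the even-weight vertices $X$. Write $A_H$ and $A_H^*$ for the adjacency and dual adjacency matrices of $H(D,2)$ with respect to $x$, and let $E_i^*$ denote the dual idempotents of $H(D,2)$. The two facts I would import are as follows. From \S\ref{s:hypercube}: the standard module $\C^{\F_2^D}$ decomposes under the Terwilliger algebra $\mathcal T_H$ of $H(D,2)$ as $\bigoplus_{r=0}^{\floor{\frac{D}{2}}}\left(\binom{D}{r}-\binom{D}{r-1}\right)\cdot W_r$, where $W_r$ is the irreducible $\mathcal T_H$-module of endpoint $r$ and dimension $D-2r+1$, carrying a basis $v_r,\ldots,v_{D-r}$ indexed by hypercube distance from $x$ on which $A_H^*$ is diagonal and $A_H$ is tridiagonal with zero diagonal (as $H(D,2)$ is bipartite). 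From \S\ref{sec:standard T module of halved cube}: on $X$ one has $A=\tfrac12(A_H^2-DI)$ and $A^*=A_H^*$, and $\C^X=\bigoplus_{i\ \mathrm{even}}E_i^*\C^{\F_2^D}$, since for $x\in X$ a vertex $y$ has even weight exactly when its hypercube distance to $x$ is even.

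For each $r$ I would study the even part $W_r^{\mathrm{ev}}=\bigoplus_{i\ \mathrm{even}}E_i^*W_r$, spanned by the even-subconstituent components of $W_r$. Because $A_H$ has zero diagonal and sends each subconstituent to its two neighbours, the operator $A=\tfrac12(A_H^2-DI)$ preserves the parity of the subconstituent index; together with $A^*=A_H^*$ being diagonal, this shows $W_r^{\mathrm{ev}}$ is invariant under $A$ and $A^*$, hence is a $\T$-module. On $W_r^{\mathrm{ev}}$ the dual adjacency $A^*$ is diagonal with distinct entries (one per surviving subconstituent) and the induced action of $A$ is tridiagonal with off-diagonal entries equal to products of consecutive hypercube intersection numbers, which are nonzero; hence any $A^*$-eigenvector generates all of $W_r^{\mathrm{ev}}$ under $A$, and $W_r^{\mathrm{ev}}$ is irreducible.

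Next I would identify $W_r^{\mathrm{ev}}$ by matching parameters with Proposition \ref{prop:irreducible Mk}. Since a $\T$-module on which $A^*$ is diagonal with distinct eigenvalues and $A$ is irreducible tridiagonal is determined up to isomorphism by these two matrices (the diagonal of $A^*$, the diagonal of $A$, and the products of consecutive off-diagonal entries of $A$), it suffices to compute them. When $r$ is even the surviving subconstituents are $r,r+2,\ldots$, there are $\floor{\frac{D}{2}}-r+1$ of them, and a direct evaluation of the entries of $\tfrac12(A_H^2-DI)$ from the hypercube intersection numbers reproduces the $\alpha_i,\beta_i,\gamma_i,\theta_i^*$ of $M_r$, so $W_r^{\mathrm{ev}}\cong M_r$. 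When $r$ is odd the surviving subconstituents are $r+1,r+3,\ldots$, there are $\ceil{\frac{D}{2}}-(r+1)+1$ of them, and the same computation matches $N_{r+1}$, so $W_r^{\mathrm{ev}}\cong N_{r+1}$. The one delicate point is the largest endpoint: when the top (or only) subconstituent of $W_r$ is odd one has $W_r^{\mathrm{ev}}=0$, which is exactly why the range of $k$ for $N_k$ ends at $\ceil{\frac{D}{2}}$ rather than $\floor{\frac{D}{2}}+1$.

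Finally I would collect multiplicities. Applying the projection $\sum_{i\ \mathrm{even}}E_i^*$ to the hypercube decomposition and using that it carries each $\mathcal T_H$-submodule to its even part gives $\C^X\cong\bigoplus_r\left(\binom{D}{r}-\binom{D}{r-1}\right)\cdot W_r^{\mathrm{ev}}$, so the multiplicity of $M_k$ equals that of $W_k$ and the multiplicity of $N_k$ equals that of $W_{k-1}$. The identity $\binom{D}{j-1}=\frac{j}{D-j+1}\binom{D}{j}$ then yields $\binom{D}{k}-\binom{D}{k-1}=\frac{D-2k+1}{D-k+1}\binom{D}{k}$ and $\binom{D}{k-1}-\binom{D}{k-2}=\frac{D-2k+3}{D-k+2}\binom{D}{k-1}$, the coefficients appearing in the statement. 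Summing over even $r$ (contributing the $M_k$) and odd $r$ (contributing the $N_k$) gives the asserted isomorphism. I expect the main obstacle to be the identification step: keeping the parity bookkeeping straight, treating the top endpoint where the even part may vanish, and carrying out the routine but error-prone computation of the tridiagonal entries of $\tfrac12(A_H^2-DI)$ to confirm they equal the $\alpha_i,\beta_i,\gamma_i$ of Proposition \ref{prop:irreducible Mk}.
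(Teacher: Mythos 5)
Your proposal is correct and follows essentially the same route as the paper: restrict the known decomposition of $\C^{\F_2^D}$ under $\bm\T$ to $\C^X$, identify the even part of each $L_k$ with $M_k$ (for $k$ even) or $N_{k+1}$ (for $k$ odd, vanishing when $k=\frac{D}{2}$ is odd), and rewrite the hypercube multiplicities. The only difference is cosmetic: the paper delegates the identification of $L_k\cap\C^X$ to the proof of Proposition \ref{prop:irreducible Mk}, whereas you redo that computation inline.
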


\begin{thm}\label{thm:classification of irreducible modules}
The $\T$-modules
\begin{align*}
M_k \qquad
&\hbox{for all even integers $k$ with $0\leq k\leq \floor{\frac{D}{2}}$},
\\
N_k \qquad
&\hbox{for all even integers $k$ with $2\leq k\leq \ceil{\frac{D}{2}}$}
\end{align*}
are all irreducible $\T$-modules up to isomorphism. Moreover these $\T$-modules are mutually non-isomorphic.
\end{thm}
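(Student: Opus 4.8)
The plan is to treat the two assertions of the theorem separately: first that the listed modules exhaust all irreducible $\T$-modules up to isomorphism, and then that no two of them are isomorphic. The first part should follow almost formally from the decomposition already obtained, while the real work lies in producing isomorphism invariants fine enough to separate the modules.

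For completeness, I would invoke Theorem \ref{thm:decomposition of halved cube} together with the general fact recorded in Section \ref{s:preliminary} that the standard module $\C^X$ is faithful and therefore contains every irreducible $\T$-module up to isomorphism. By Proposition \ref{prop:irreducible Mk} each $M_k$ and each $N_k$ is irreducible, so the displayed direct-sum decomposition of $\C^X$ is a decomposition into irreducibles. Any irreducible $\T$-module, occurring as a summand of the faithful module $\C^X$, is consequently isomorphic to one of the $M_k$ or $N_k$. This step is routine once the decomposition theorem is in hand.

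The substantive part is mutual non-isomorphism, which I would establish by exhibiting isomorphism invariants that separate the modules. The first invariant is the largest eigenvalue of $A^*$ acting on the module. From the formulas of Proposition \ref{prop:irreducible Mk} one has $\theta_0^* = D-2k$ on both $M_k$ and $N_k$, and since $k\mapsto D-2k$ is injective, any two modules carrying different values of the parameter $k$ have different top dual eigenvalues and are therefore non-isomorphic. This disposes of every pair except $M_k$ and $N_k$ with the same $k$, which coexist precisely when $k$ is even and $2\leq k\leq \floor{\frac{D}{2}}$.

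For this remaining case I would use a finer, genuinely $A$-dependent invariant. Because the dual eigenvalues $\theta_i^*$ within a single module are mutually distinct, the top $A^*$-eigenspace of each module is one-dimensional, and the leading diagonal entry $\alpha_0$ of the tridiagonal matrix representing $A$ equals the scalar describing the action of $A$ on this one-dimensional space followed by the spectral projection back onto it; this description is basis-free, so $\alpha_0$ is an isomorphism invariant. Comparing formulas gives $\alpha_0=-k$ for $M_k$ and $\alpha_0=D-3k+2$ for $N_k$, and these agree only if $k=(D+2)/2>\floor{\frac{D}{2}}$, which is impossible in the relevant range; hence $M_k\not\cong N_k$. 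I expect the only delicate point to be the even-$D$ subcase just treated, where $M_k$ and $N_k$ share both their dimension and their entire $A^*$-spectrum, so that the dual eigenvalues alone cannot distinguish them and an invariant built from the action of $A$ is genuinely required. When $D$ is odd the argument is easier, since then $\dim N_k=\dim M_k+1$, but the invariant $\alpha_0$ settles both parities uniformly.
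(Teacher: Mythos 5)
Your proof is correct and follows essentially the same route as the paper: exhaustiveness is read off from Theorem \ref{thm:decomposition of halved cube} together with the faithfulness of $\C^X$, and the crucial case $M_k$ versus $N_k$ is settled by comparing the invariant $\alpha_0$ (namely $-k$ versus $D-3k+2$, forcing $k=\frac{D}{2}+1$, out of range), exactly as in the paper. The only cosmetic difference is that you separate modules with distinct parameters $k$ by the top $A^*$-eigenvalue $D-2k$, whereas the paper uses dimensions (first reducing to $D$ even and $k=k'$); both are valid isomorphism invariants.
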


\begin{thm}\label{thm:structure of T}
The algebra $\T$ is isomorphic to
$$
\bigoplus_{\substack{k=0 \\ \hbox{\tiny $k$ is even}}}^{\floor{\frac{D}{2}}}
\Mat_{\floor{\frac{D}{2}}-k+1}(\C)
\oplus
\bigoplus_{\substack{k=2 \\ \hbox{\tiny $k$ is even}}}^{\ceil{\frac{D}{2}}}
\Mat_{\ceil{\frac{D}{2}}-k+1}(\C).
$$
In particular
$
\dim \T={\floor{\frac{D}{2}}+3 \choose 3}+{\ceil{\frac{D}{2}}+1 \choose 3}$.
\end{thm}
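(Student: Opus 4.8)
The plan is to obtain the Artin--Wedderburn decomposition of $\T$ directly from the classification of irreducible modules, and then to reduce the dimension count to an elementary summation identity.

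First I would invoke the semisimplicity of $\T$ recorded in Section~\ref{s:preliminary}: since $\T$ is closed under the conjugate-transpose map it is semisimple, and therefore
\[
\T\cong\bigoplus_{V}\End(V),
\]
where $V$ runs over a complete set of pairwise non-isomorphic irreducible $\T$-modules. By Theorem~\ref{thm:classification of irreducible modules} this set consists exactly of the modules $M_k$ (for even $k$ with $0\leq k\leq\floor{\frac{D}{2}}$) and $N_k$ (for even $k$ with $2\leq k\leq\ceil{\frac{D}{2}}$), and these are mutually non-isomorphic. By Proposition~\ref{prop:irreducible Mk} we have $\dim M_k=\floor{\frac{D}{2}}-k+1$ and $\dim N_k=\ceil{\frac{D}{2}}-k+1$. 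Since $\End(V)\cong\Mat_{\dim V}(\C)$ for any finite-dimensional complex vector space $V$, substituting these dimensions yields precisely the asserted isomorphism.

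For the dimension formula I would use that $\dim\T=\sum_{V}(\dim V)^2$, so that
\[
\dim\T
=\sum_{\substack{k=0\\ k\text{ even}}}^{\floor{\frac{D}{2}}}\left(\floor{\frac{D}{2}}-k+1\right)^2
+\sum_{\substack{k=2\\ k\text{ even}}}^{\ceil{\frac{D}{2}}}\left(\ceil{\frac{D}{2}}-k+1\right)^2.
\]
Writing $n=\floor{\frac{D}{2}}$ and $m=\ceil{\frac{D}{2}}$, I would reparametrize each sum by the module dimension $d$: the $M_k$ contribute $\sum d^2$ over integers $d\equiv n+1\pmod 2$ with $1\leq d\leq n+1$, while the $N_k$ contribute $\sum d^2$ over integers of the same parity as $m-1$ with $1\leq d\leq m-1$. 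When $D$ is odd one has $m=n+1$, the two residue classes are complementary, and the two sums combine into the single sum $\sum_{d=1}^{n+1}d^2=\binom{n+3}{3}+\binom{n+2}{3}=\binom{n+3}{3}+\binom{m+1}{3}$; when $D$ is even one has $m=n$, the two sums lie in the same residue class and differ only by the top term $(n+1)^2$, so that each sum individually collapses to $\binom{n+3}{3}$ and $\binom{n+1}{3}=\binom{m+1}{3}$ via the elementary identity $\binom{n+3}{3}-\binom{n+1}{3}=(n+1)^2$. In either case the total equals the claimed value $\binom{n+3}{3}+\binom{m+1}{3}$.

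The structural part of the argument is immediate once Theorem~\ref{thm:classification of irreducible modules} and Proposition~\ref{prop:irreducible Mk} are in hand, and carries no further representation-theoretic content. The only place demanding care is the parity bookkeeping in the dimension sum, where the cases $D$ even and $D$ odd must be reconciled into one uniform binomial expression; I expect this to be the main, though entirely routine, obstacle.
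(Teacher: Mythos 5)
Your proposal is correct and follows essentially the same route as the paper: semisimplicity plus Theorem \ref{thm:classification of irreducible modules} gives $\T\cong\bigoplus\End(M_k)\oplus\bigoplus\End(N_k)$, Proposition \ref{prop:irreducible Mk} identifies each summand as a full matrix algebra of the stated size, and the dimension count reduces to the sum-of-squares identity. The only difference is that you spell out the parity bookkeeping behind the binomial identity, which the paper asserts without detail; your verification of it is correct.
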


Note that the notations of this section will be used in the rest of this paper.

\section{The Terwilliger algebra of the hypercube}\label{s:hypercube}

\begin{defn}\label{defn:cube}
The {\it $D$-dimensional hypercube} $H(D,2)$ is a finite simple connected graph with vertex set $\F_2^D$ and $x,y\in \F_2^D$ are adjacent if and only if $w(x-y)=1$.
\end{defn}

Note that $H(D,2)$ is a distance-regular graph of diameter $D$ whose intersection numbers \cite{DRG_book1989,BannaiIto1984,TerAlgebraIII} are
$$
a_i=0,\qquad
b_i=D-i,\qquad
c_i=i \qquad
\hbox{for $i=0,1,\ldots,D$}.
$$

For $i=0,1,\ldots,D$ the $i^{{\rm \,th}}$ distance matrix $\bm A_i$ of $H(D,2)$ is a $0$-$1$ matrix in $\Mat_{\F_2^D}(\C)$ given by
\begin{gather}\label{eq:Ai_cube}
(\bm  A_i)_{xy}=1
  \quad
  \hbox{if and only if}
  \quad
  w(x-y)=i
\end{gather}
for all $x,y\in \F_2^D$. The eigenvalues of $\bm A=\bm A_1$ are
\begin{equation*}
\bm{\theta}_i=D-2i\quad\hbox{for $i=0,1,\ldots,D$}.
\end{equation*}
Let $\bm{E}_i$ denote the primitive idempotent of $H(D,2)$ associated with $\bm{\theta}_i$ for $i=0,1,\ldots,D$.

The distance-regular graph $H(D,2)$ is $Q$-polynomial with respect to the ordering $\{\bm E_i\}_{i=0}^D$ whose dual intersection numbers \cite{BannaiIto1984,TerAlgebraIII} are
$$
a_i^*=0, \qquad
b_i^*=D-i, \qquad
c_i^*=i \qquad
\hbox{for $i=0,1,\ldots,D$}.
$$
Let  $\bm{A}_i^*=\bm{A}^*_i(x)$ denote the $i^{\rm \, th}$ dual distance matrix of $H(D,2)$ with respect to $x$ for $i=0,1,\ldots, D$.
The diagonal matrix $\bm A^*=\bm A^*_1$ is given by
\begin{gather}\label{eq:dual ad of hyper}
\bm A^*_{yy}=D-2w(x-y)
\qquad
\hbox{for all $y\in \F_2^D$}.
\end{gather}

Let $t$ denote an indeterminate over $\C$. In view of the intersection numbers and the dual intersection numbers of $H(D,2)$, we consider the polynomials $\{v_i(t)\}_{i=0}^D$ given by the following recurrence relation:
\begin{align*}
tv_i(t)= (D-i+1) v_{i-1}(t)+(i+1) v_{i+1}(t)
\qquad \hbox{for all $i=1,2,\ldots,D-1$}
\end{align*}
with $v_0(t)=1$ and $v_1(t)=t$.
Recall that for any nonzero $q\in \C$ and any integer $n\geq 1$, the {\it Krawtchouk polynomials}  are
$$
K_i(t;q,n)
=\sum_{j=0}^i
(-q)^j
\frac{{i\choose j}
{t\choose j}}{{n\choose j}}
\qquad
\hbox{for all $i=0,1,\ldots,n$}.
$$
Applying the three-term recurrence of the Krawtchouk polynomials \cite[\S 9.11]{Koe2010} it is routine to verify that
\begin{gather}\label{e:vi}
v_i(t)={D\choose i}
\cdot
K_i
\left(
\frac{D-t}{2};2,D
\right)
\qquad
\hbox{for all $i=0,1,\ldots,D$}.
\end{gather}
The following lemma is immediate from the construction of $\{v_i(t)\}_{i=0}^D$.

\begin{lem}\label{lem:AiAi*_cube}
\begin{enumerate}
\item $\bm A_i=v_i(\bm A)$ for all $i=0,1,\ldots, D$.

\item  $\bm A_i^*=v_i(\bm A^*)$ for all $i=0,1,\ldots, D$.
\end{enumerate}
\end{lem}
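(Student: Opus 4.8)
The plan is to prove both parts by a short induction on $i$, the whole point being that the recurrence defining the polynomials $\{v_i(t)\}_{i=0}^D$ is, after substituting a matrix for $t$, literally the three-term recurrence satisfied by the distance matrices (part (i)) and by the dual distance matrices (part (ii)) of $H(D,2)$. For part (i), recall that in any distance-regular graph the distance matrices obey $\bm A_i \bm A = b_{i-1}\bm A_{i-1}+a_i\bm A_i+c_{i+1}\bm A_{i+1}$, and that for $H(D,2)$ we have $a_i=0$, $b_{i-1}=D-i+1$, and $c_{i+1}=i+1$. Hence, using that $\bm A$ and $\bm A_i$ commute (both lie in the commutative Bose--Mesner algebra $\M$),
$$
\bm A\,\bm A_i=(D-i+1)\bm A_{i-1}+(i+1)\bm A_{i+1}\qquad(1\le i\le D-1).
$$
Writing $B_i:=v_i(\bm A)$ and putting $t=\bm A$ in the recurrence for $v_i(t)$ gives the identical relation $\bm A\,B_i=(D-i+1)B_{i-1}+(i+1)B_{i+1}$ on the same range of $i$. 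Since $B_0=I=\bm A_0$ and $B_1=\bm A=\bm A_1$, I would induct: assuming $B_{i-1}=\bm A_{i-1}$ and $B_i=\bm A_i$, subtracting the two recurrences and dividing by the nonzero scalar $i+1$ yields $B_{i+1}=\bm A_{i+1}$. This forces $B_i=\bm A_i$ for all $i$.

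For part (ii) the argument is formally identical with $\bm A$ replaced by $\bm A^*$ and $\M$ by the commutative dual Bose--Mesner algebra $\M^*$. The dual distance matrices satisfy $\bm A_i^*\,\bm A^*=b_{i-1}^*\bm A_{i-1}^*+a_i^*\bm A_i^*+c_{i+1}^*\bm A_{i+1}^*$, and the crucial observation is that the dual intersection numbers of $H(D,2)$ coincide with its intersection numbers, namely $a_i^*=0$, $b_i^*=D-i$, $c_i^*=i$. Thus $\{\bm A_i^*\}_{i=0}^D$ obeys exactly the same recurrence as $\{\bm A_i\}_{i=0}^D$, and together with the base cases $\bm A_0^*=I$ and $\bm A_1^*=\bm A^*$ the same induction delivers $\bm A_i^*=v_i(\bm A^*)$ for all $i$.

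There is no genuine obstacle here; once the recurrences are matched the proof is purely formal, which is why the lemma is stated as immediate. The only points deserving a moment's care are the two base cases for part (ii): $\bm A_1^*=\bm A^*$ is the abbreviation fixed in the preliminaries, while $\bm A_0^*=I$ follows from $(\bm A_0^*)_{yy}=2^D(\bm E_0)_{xy}$ together with $\bm E_0=2^{-D}J$, where $J$ is the all-ones matrix. One should also note that the recurrence defining $v_i(t)$ is prescribed only for $1\le i\le D-1$, but this is exactly enough to determine $v_2,\dots,v_D$ from $v_0,v_1$, so the induction covers all indices $i=0,1,\dots,D$.
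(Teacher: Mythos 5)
Your proof is correct and is exactly the argument the paper has in mind: the lemma is declared ``immediate from the construction of $\{v_i(t)\}_{i=0}^D$'' precisely because the defining recurrence of the $v_i$ matches the three-term recurrences of the distance and dual distance matrices of $H(D,2)$, with the induction and base cases you spell out. No gaps; you have simply written out the details the paper omits.
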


Let $\bm{\T}=\bm{\T}(x)$ denote the Terwilliger algebra of $H(D,2)$ with respect to $x$. We now recall some results on $\bm{\T}$.

\begin{prop}
[Corollary 6.8, \cite{hypercube2002}]
\label{prop:irreducible module of hypercube}
For any integer $k$ with $0\leq k\leq \floor{\frac{D}{2}}$, there exists a $(D-2k+1)$-dimensional irreducible $\bm{\T}$-module $L_k$ that has a basis with respect to which the matrices representing $\bm{A}$ and $\bm{A}^*$ are
$$\begin{pmatrix}
    0 & \gamma_1 &  &  & \bm{0} \\
    \beta_0 & 0 & \gamma_2 &  &  \\
     & \beta_1 & 0 & \ddots &  \\
     &  & \ddots & \ddots &\gamma_{D-2k} \\
    \bm{0} &  &  &\beta_{D-2k-1} &0
  \end{pmatrix},
  \qquad
  \begin{pmatrix}
                       \theta^*_0 &  &  &  & \bm{0} \\
                        & \theta^*_1 &  &  &  \\
                        &  & \theta^*_2 &  &  \\
                        &  &  & \ddots &  \\
                       \bm{0} &  &  &  & \theta^*_{D-2k}
                     \end{pmatrix},
$$
respectively, where
\begin{align*}
  \beta_i &= i+1
  \qquad(0\leq i\leq D-2k-1),\\
  \gamma_i &= D-i-2k+1
  \qquad
  (1\leq i\leq D-2k), \\
  \theta_i^* &= D-2(i+k)
  \qquad
  (0\leq i\leq D-2k).
\end{align*}
\end{prop}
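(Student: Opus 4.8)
The plan is to realize $\bm\T$ as a homomorphic image of $\U$ acting on the standard module and then to read off its irreducible constituents from the representation theory of $\sl$. Identify $\C^{\F_2^D}$ with the $D$-fold tensor power $(\C^2)^{\otimes D}$, the $j$th factor recording the $j$th coordinate of a vertex. On a single factor $\C^2$, let $\h,\ee,\f$ act as the Cartan element $\mathrm{diag}(1,-1)$ and the raising and lowering nilpotents; then the local adjacency matrix is $\ee+\f$ and the local dual adjacency matrix is $\h$. Letting each of $\h,\ee,\f$ act as a sum over the $D$ coordinates produces operators, which I again denote $\h,\ee,\f$, on $\C^{\F_2^D}$ satisfying $\bm A=\ee+\f$ and $\bm A^*=\h$. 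Because operators attached to distinct tensor factors commute, the local relations $[\h,\ee]=2\ee$, $[\h,\f]=-2\f$, $[\ee,\f]=\h$ lift verbatim, so $\h,\ee,\f$ span a copy of $\sl$ and make $\C^{\F_2^D}$ an $\sl$-module.

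First I would show that $\bm\T$ equals the image $U$ of $\U$ generated by $\h,\ee,\f$. The inclusion $\bm\T\subseteq U$ is immediate from $\bm A=\ee+\f$ and $\bm A^*=\h$. For $U\subseteq\bm\T$, observe that the diagonal idempotent $\bm E^*_i$ projecting onto the span of the vertices at distance $i$ from $x$ is a polynomial in $\bm A^*$ and hence lies in $\bm\T$; since $\ee$ lowers the distance to $x$ by one and $\f$ raises it by one, they can be recovered from $\bm A$ by $\ee=\sum_i \bm E^*_{i-1}\bm A\,\bm E^*_i$ and $\f=\sum_i \bm E^*_{i+1}\bm A\,\bm E^*_i$, both of which lie in $\bm\T$. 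Thus $\bm\T=U$, and in particular a subspace of $\C^{\F_2^D}$ is $\bm\T$-invariant if and only if it is $\sl$-invariant.

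Next I would decompose the $\sl$-module $(\C^2)^{\otimes D}$ into irreducibles. Since $\sl$ is semisimple and $\C^2$ is its two-dimensional irreducible module, the Clebsch--Gordan rule gives $(\C^2)^{\otimes D}\cong\bigoplus_{k=0}^{\floor{D/2}} m_k\cdot V_{D-2k}$, where $V_n$ denotes the $(n+1)$-dimensional irreducible $\sl$-module and each multiplicity satisfies $m_k\geq 1$; in particular a copy of $V_{D-2k}$ occurs for every $k$ with $0\leq k\leq\floor{D/2}$, and this copy is the asserted module $L_k$, of dimension $D-2k+1$. On such a copy, write $n=D-2k$ and choose the standard weight basis $v_0,v_1,\ldots,v_{n}$ with $v_0$ a highest-weight vector and $v_j=\tfrac{1}{j!}\f^{j}v_0$. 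The standard formulas then read $\h v_j=(n-2j)v_j$, $\f v_j=(j+1)v_{j+1}$, and $\ee v_j=(n-j+1)v_{j-1}$. Reading off $\bm A^*=\h$ and $\bm A=\ee+\f$ in this basis gives exactly the diagonal matrix with entries $\theta_j^*=D-2(j+k)$ together with the tridiagonal matrix whose subdiagonal entries are $\beta_j=j+1$ and whose superdiagonal entries are $\gamma_j=n-j+1=D-j-2k+1$, matching the stated forms.

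Finally, the irreducibility of $L_k=V_{D-2k}$ as a $\bm\T$-module is immediate: any $\bm\T$-invariant subspace is $U$-invariant, hence $\sl$-invariant, so the $\sl$-irreducibility of $V_{D-2k}$ forces it to have no proper nonzero $\bm\T$-submodule. The step demanding the most care is fixing the normalizations so that the extracted tridiagonal entries come out as precisely $\beta_j=j+1$ and $\gamma_j=D-j-2k+1$ rather than some rescaled pair with the same products $\beta_{j}\gamma_{j+1}$; this is pinned down by the weight basis above, and the relation $[\ee,\f]=\h$ guarantees that these are the genuine structure constants of $V_{D-2k}$.
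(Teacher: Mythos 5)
Your argument is correct. Note that the paper does not prove this proposition at all: it is imported verbatim as Corollary 6.8 of the cited reference \cite{hypercube2002}, so there is no internal proof to compare against. What you have written is a legitimate self-contained derivation, and it follows the same $\sl$-theoretic mechanism that underlies the cited source: identifying $\C^{\F_2^D}$ with $(\C^2)^{\otimes D}$, realizing $\bm A=\ee+\f$ and $\bm A^*=\h$, showing $\bm\T$ coincides with the image of $\U$ (the recovery $\ee=\sum_i\bm E^*_{i-1}\bm A\bm E^*_i$ works precisely because the hypercube has $a_i=0$, so $\bm A$ has no distance-preserving component), and reading off the tridiagonal and diagonal data from the weight basis $v_j=\tfrac{1}{j!}\f^j v_0$ of $V_{D-2k}$. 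The computations $\f v_j=(j+1)v_{j+1}$, $\ee v_j=(n-j+1)v_{j-1}$, $\h v_j=(n-2j)v_j$ with $n=D-2k$ do give exactly $\beta_j$, $\gamma_j$, $\theta^*_j$ as stated, and the multiplicity $m_k=\binom{D}{k}-\binom{D}{k-1}>0$ from Clebsch--Gordan guarantees a copy of $V_{D-2k}$ exists for each $0\leq k\leq\floor{\frac{D}{2}}$ (you assert $m_k\geq 1$ without computing it, but this is standard and harmless). The only cosmetic point is that the highest-weight vector of $L_k$ sits in the distance-$k$ sphere, which is why $\theta^*_0=D-2k$; you use this implicitly and correctly.
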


\begin{thm}
[Theorem 10.2, \cite{hypercube2002}]
\label{thm:decomposition of hypercube}
The standard $\bm{\T}$-module $\C^{\F_2^D}$ is isomorphic to
\begin{equation*}
\bigoplus\limits_{k=0}^{\floor{\frac{D}{2}}}\frac{D-2k+1}{D-k+1}\binom{D}{k}\cdot L_{k}.
\end{equation*}
\end{thm}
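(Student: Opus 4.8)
The plan is to realize $\bm\T$ as the image of the universal enveloping algebra $\U$ acting on $\C^{\F_2^D}$, and then to obtain the decomposition directly from the representation theory of $\sl$. By the vertex-transitivity of $H(D,2)$ I may take the base vertex $x=0$, so that $\bm A^*_{yy}=D-2w(y)$. Identifying $\C^{\F_2^D}$ with $(\C^2)^{\otimes D}$ via $y=(y_1,\ldots,y_D)\mapsto v_{y_1}\otimes\cdots\otimes v_{y_D}$ for a fixed basis $v_0,v_1$ of $\C^2$, the adjacency matrix becomes $\bm A=\sum_{i=1}^D\sigma_i$, where $\sigma_i$ swaps $v_0,v_1$ in the $i$-th tensor factor and acts as the identity on the others, and the dual adjacency matrix becomes $\bm A^*=\sum_{i=1}^D\tau_i$, where $\tau_i$ acts as $\mathrm{diag}(1,-1)$ on the $i$-th factor. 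Writing $e,f,h$ for the standard generators of $\sl$ on $\C^2$, one has $\sigma=e+f$ and $\tau=h$, so under the coproduct representation $\rho\colon\U\to\End\bigl((\C^2)^{\otimes D}\bigr)$ determined by $X\mapsto\sum_i X_i$ we get $\bm A=\rho(e+f)$ and $\bm A^*=\rho(h)$.

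Next I would prove $\bm\T=\rho(\U)$. Since $\sigma_i$ and $\tau_j$ act on distinct tensor factors when $i\neq j$, they commute, whence $[\bm A,\bm A^*]=\sum_i[\sigma_i,\tau_i]=\rho([e+f,h])=2\rho(f-e)$. Together with $\bm A=\rho(e)+\rho(f)$ this shows $\rho(e),\rho(f),\rho(h)$ all lie in the algebra generated by $\bm A$ and $\bm A^*$, i.e.\ in $\bm\T$, so $\rho(\U)\subseteq\bm\T$; the reverse inclusion is immediate since $\bm A,\bm A^*\in\rho(\U)$. Hence $\bm\T=\rho(\U)$, and in particular the $\bm\T$-submodules of $\C^{\F_2^D}$ coincide with its $\sl$-submodules.

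I would then decompose $(\C^2)^{\otimes D}$ as an $\sl$-module. The weight space of weight $D-2j$ is spanned by the tensor vectors having exactly $j$ factors equal to $v_1$, so it has dimension $\binom{D}{j}$; by the standard highest-weight count the irreducible $\sl$-module $V_{D-2k}$ of dimension $D-2k+1$ therefore occurs with multiplicity $\binom{D}{k}-\binom{D}{k-1}=\frac{D-2k+1}{D-k+1}\binom{D}{k}$ for each $0\leq k\leq\floor{\frac{D}{2}}$. Because $\bm\T=\rho(\U)$, each $V_{D-2k}$ is an irreducible $\bm\T$-module; writing it in a weight basis, in which $\bm A^*=\rho(h)$ is diagonal with eigenvalues $D-2(i+k)$ and $\bm A=\rho(e+f)$ is tridiagonal, identifies it with the module $L_k$ of Proposition \ref{prop:irreducible module of hypercube}. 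Assembling the constituents yields the stated isomorphism.

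The one genuinely computational point—and thus the main obstacle—is matching the subdiagonal and superdiagonal entries of $\bm A=\rho(e+f)$ in the weight basis with the prescribed $\beta_i$ and $\gamma_i$ of Proposition \ref{prop:irreducible module of hypercube}; this requires the explicit action of $e$ and $f$ on a suitably normalized weight basis of $V_{D-2k}$ together with a rescaling of the basis vectors, and is the only place where constants must be tracked carefully. Everything else is structural: the identification $\bm\T=\rho(\U)$ together with the transfer of irreducibility reduces the decomposition to the Clebsch--Gordan multiplicities, which are precisely the ballot numbers $\frac{D-2k+1}{D-k+1}\binom{D}{k}$ appearing in the statement.
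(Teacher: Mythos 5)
Your argument is correct. Note, however, that the paper does not prove this statement at all: it is imported verbatim from the cited reference \cite{hypercube2002} (Theorem 10.2 there), and the $\mathfrak{sl}_2$ mechanism you use --- realizing $\bm A=\rho(e+f)$, $\bm A^*=\rho(h)$ on $(\C^2)^{\otimes D}$, showing $\bm\T=\rho(\U)$ via the commutator $[\bm A,\bm A^*]=2\rho(f-e)$, and reading off the multiplicities $\binom{D}{k}-\binom{D}{k-1}=\frac{D-2k+1}{D-k+1}\binom{D}{k}$ from the weight-space dimensions --- is essentially the mechanism underlying that reference. All the individual steps check out, including the multiplicity identity. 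The one step you defer, matching $V_{D-2k}$ with $L_k$, is in fact immediate: in the standard basis $w_i=f^iw_0/i!$ one has $(e+f)w_i=(D-2k-i+1)w_{i-1}+(i+1)w_{i+1}$ and $hw_i=(D-2k-2i)w_i$, which reproduces $\beta_i=i+1$, $\gamma_i=D-i-2k+1$, $\theta_i^*=D-2(i+k)$ of Proposition \ref{prop:irreducible module of hypercube} with no rescaling needed.
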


\begin{thm}
[Corollary 6.9, \cite{hypercube2002}]
The $\bm \T$-modules
$
L_k
$
for all integers $k$ with $0\leq k\leq \floor{\frac{D}{2}}$
are all irreducible $\bm \T$-modules up to isomorphism. Moreover these $\bm \T$-modules are mutually non-isomorphic.
\end{thm}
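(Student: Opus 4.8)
The plan is to derive both assertions from the decomposition of the standard module in Theorem~\ref{thm:decomposition of hypercube}, combined with the general semisimplicity observations recorded in Section~\ref{s:preliminary} applied to the graph $H(D,2)$. The structural facts I would rely on are that $\bm\T$ is semisimple, being closed under the conjugate-transpose map, and that the standard module $\C^{\F_2^D}$ is faithful; as explained in the preliminaries, faithfulness forces $\C^{\F_2^D}$ to contain, up to isomorphism, every irreducible $\bm\T$-module.

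For the completeness assertion I would argue as follows. By Theorem~\ref{thm:decomposition of hypercube} the standard module is a direct sum of copies of $L_0,L_1,\ldots,L_{\floor{\frac{D}{2}}}$, where $L_k$ occurs with multiplicity $\frac{D-2k+1}{D-k+1}\binom{D}{k}$. For each $k$ in the stated range one has $D-2k+1\geq 1$, $D-k+1>0$ and $\binom{D}{k}>0$, so every multiplicity is strictly positive. Consequently the irreducible $\bm\T$-modules occurring as constituents of $\C^{\F_2^D}$ are precisely $L_0,\ldots,L_{\floor{\frac{D}{2}}}$, each genuinely appearing. Combining this with faithfulness, any irreducible $\bm\T$-module is isomorphic to a constituent of $\C^{\F_2^D}$ and hence to some $L_k$, which is the first claim.

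For mutual non-isomorphism I would compare dimensions. By Proposition~\ref{prop:irreducible module of hypercube} the module $L_k$ has dimension $D-2k+1$, and $k\mapsto D-2k+1$ is strictly decreasing on $\{0,1,\ldots,\floor{\frac{D}{2}}\}$. Thus the dimensions are pairwise distinct, and since isomorphic modules have equal dimension, the $L_k$ are pairwise non-isomorphic.

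The genuinely substantive content lies in the earlier results---the construction and irreducibility of each $L_k$ in Proposition~\ref{prop:irreducible module of hypercube} and, above all, the explicit multiplicity formula in Theorem~\ref{thm:decomposition of hypercube}. Granting these, the present statement is a bookkeeping consequence and I do not expect a real obstacle; the only point deserving a moment of care is verifying that every multiplicity is strictly positive, which ensures that the list $\{L_k\}$ is simultaneously exhaustive and free of repetition.
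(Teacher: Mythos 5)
Your argument is correct. Note that the paper itself gives no proof of this statement---it is imported verbatim as Corollary 6.9 of \cite{hypercube2002}---but your route (completeness from the faithfulness/semisimplicity remarks of \S\ref{s:preliminary} together with the decomposition in Theorem \ref{thm:decomposition of hypercube}, and non-isomorphism from the pairwise distinct dimensions $D-2k+1$) is exactly the strategy the authors use to prove the analogous halved-cube result, Theorem \ref{thm:classification of irreducible modules}, so nothing further is needed.
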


\begin{thm}
[Theorem 14.4, \cite{hypercube2002}]
The algebra $\bm{\T}$ is isomorphic to
$$
\bigoplus\limits_{k=0}^{\floor{\frac{D}{2}}}\Mat_{D-2k+1}(\C).
$$
In particular $\dim \bm{\T}={D+3\choose 3}$.
\end{thm}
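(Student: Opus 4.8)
The plan is to read the structure of $\bm{\T}$ off directly from its semisimplicity together with the classification of irreducible $\bm{\T}$-modules recalled just above. Because $\bm{\T}$ is closed under the conjugate-transpose map it is semisimple, so by the discussion in Section~\ref{s:preliminary} it is isomorphic to $\bigoplus_V\End(V)$, the sum running over a complete set of pairwise non-isomorphic irreducible $\bm{\T}$-modules $V$; since $\C$ is algebraically closed each block $\End(V)$ is the full matrix algebra $\Mat_{\dim V}(\C)$. Hence the whole problem reduces to listing these modules and recording their dimensions.

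By Corollary~6.9 of \cite{hypercube2002}, recalled above, the modules $L_k$ with $0\leq k\leq\floor{\frac{D}{2}}$ are exactly the irreducible $\bm{\T}$-modules up to isomorphism and are mutually non-isomorphic, while Proposition~\ref{prop:irreducible module of hypercube} gives $\dim L_k=D-2k+1$. Substituting $\End(L_k)\cong\Mat_{D-2k+1}(\C)$ into the Wedderburn decomposition yields
$$
\bm{\T}\cong\bigoplus_{k=0}^{\floor{\frac{D}{2}}}\Mat_{D-2k+1}(\C),
$$
which is the first assertion.

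For the dimension, $\dim\Mat_n(\C)=n^2$ gives $\dim\bm{\T}=\sum_{k=0}^{\floor{\frac{D}{2}}}(D-2k+1)^2$, a sum of squares of the integers $D+1,D-1,D-3,\ldots$ down to $1$ (if $D$ is even) or $2$ (if $D$ is odd). I would evaluate it by expanding $(D+1-2k)^2$ and applying the usual formulas for $\sum k$ and $\sum k^2$, then simplifying according to whether $\floor{\frac{D}{2}}$ equals $D/2$ or $(D-1)/2$; both parities collapse to $\frac{(D+1)(D+2)(D+3)}{6}=\binom{D+3}{3}$. This closing summation, which splits on the parity of $D$, is the only step requiring care, and it is entirely routine---the structural content is already furnished by semisimplicity and the classification of the irreducibles.
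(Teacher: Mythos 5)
Your argument is correct, and it is essentially the same argument the paper itself gives for the analogous halved-cube result (Theorem \ref{thm:structure of T}): semisimplicity of the Terwilliger algebra via closure under conjugate-transpose, the Wedderburn decomposition $\bigoplus_V\End(V)$ over the complete list of irreducibles furnished by the classification theorem, $\End(L_k)\cong\Mat_{D-2k+1}(\C)$ from Proposition \ref{prop:irreducible module of hypercube}, and the routine parity-split evaluation of $\sum_{k=0}^{\floor{\frac{D}{2}}}(D-2k+1)^2=\binom{D+3}{3}$. The paper quotes this particular statement from \cite{hypercube2002} without reproducing a proof, so there is nothing to compare beyond noting that your route is the standard one and matches the template used in \S\ref{sec:decomposition}.
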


Note that $H(D,2)$ is an example of the $D$-dimensional Hamming graphs. In the case of the $D$-dimensional Hamming graphs, the decomposition formula for the standard modules was recently given in \cite[Theorem 1.7]{Huang:CG&Hamming} and \cite[p. 17]{Hamming:2021}.

\section{The Terwilliger algebra of the halved cube}\label{sec:standard T module of halved cube}

Given a nonempty subset $Y$ of $\F_2^D$, we identify $\C^Y$ as a subspace of $\C^{\F_2^D}$ via the natural injection $\C^Y\to \C^{\F_2^D}$ and we view any $M\in \Mat_Y(\C)$ as the linear map $\C^Y\to \C^Y$ that sends $v$ to $Mv$ for all $v\in \C^Y$. Recall from \S\ref{s:introduction} that $A$ denotes the adjacency matrix of $\frac{1}{2}H(D,2)$.

\begin{lem}\label{lem:A_halved}
$A=\bm A_2|_{\C^X}$.
\end{lem}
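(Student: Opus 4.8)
The plan is to verify directly from the definitions that $\bm A_2$ preserves the subspace $\C^X$ and that its restriction to $\C^X$ has the same matrix entries as $A$. I would first isolate the relevant definitions. By \eqref{eq:Ai_cube} the matrix $\bm A_2\in\Mat_{\F_2^D}(\C)$ satisfies $(\bm A_2)_{xy}=1$ precisely when $w(x-y)=2$, while by Definition \ref{defn:halved cube} the adjacency matrix $A\in\Mat_X(\C)$ of $\frac{1}{2}H(D,2)$ satisfies $A_{xy}=1$ precisely when $w(x-y)=2$ for $x,y\in X$. Under the identification of $\C^X$ as a subspace of $\C^{\F_2^D}$, the symbol $\bm A_2|_{\C^X}$ denotes the restriction of the linear map $\bm A_2$ to $\C^X$. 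Thus the content of the lemma is twofold: first, that $\C^X$ is invariant under $\bm A_2$, so that the restriction is a well-defined map $\C^X\to\C^X$ (equivalently, an element of $\Mat_X(\C)$); and second, that on $\C^X$ it agrees entrywise with $A$.

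Next I would establish invariance by a parity argument. If $x,y\in\F_2^D$ satisfy $w(x-y)=2$, then $x$ and $y$ differ in exactly two coordinates, so $w(x)\equiv w(y)\pmod 2$. Hence if $y\in X$, meaning $w(y)$ is even, then every $x$ with $(\bm A_2)_{xy}\neq 0$ also has even weight and lies in $X$. Writing $\{e_y\}_{y\in\F_2^D}$ for the standard basis of $\C^{\F_2^D}$, this gives $\bm A_2 e_y=\sum_{x\in X}(\bm A_2)_{xy}\,e_x\in\C^X$ for each $y\in X$, so $\C^X$ is $\bm A_2$-invariant.

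Finally I would compare entries. For $x,y\in X$ the $(x,y)$-entry of $\bm A_2|_{\C^X}$ is $(\bm A_2)_{xy}$, which equals $1$ if and only if $w(x-y)=2$, if and only if $x$ and $y$ are adjacent in $\frac{1}{2}H(D,2)$, if and only if $A_{xy}=1$. Therefore $A=\bm A_2|_{\C^X}$, as claimed. The argument is entirely elementary; the only point needing a moment's attention — and what I regard as the crux — is the parity observation guaranteeing invariance, since this is exactly what makes the restriction well-defined as an operator on $\C^X$ rather than merely a submatrix.
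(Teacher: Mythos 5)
Your proof is correct and takes the same route the paper does: the paper simply declares the lemma ``immediate from Definition \ref{defn:halved cube} and (\ref{eq:Ai_cube})'', and your write-up spells out exactly the two points (the parity argument giving $\bm A_2$-invariance of $\C^X$, and the entrywise agreement) that the authors leave implicit. Nothing is missing.
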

\begin{proof}
Immediate from Definition \ref{defn:halved cube} and (\ref{eq:Ai_cube}).
\end{proof}

By Lemma \ref{lem:AiAi*_cube}(i) we have $\bm A_2=v_2(\bm A)$.
Combined with Lemma \ref{lem:A_halved} it follows that the eigenvalues of $A$ are $v_2(\bm \theta_i)=\theta_i$ for $i=0,1,\ldots,\floor{\frac{D}{2}}$.
Recall from \S\ref{s:introduction} that $E_i$ denotes the primitive idempotent of $\frac{1}{2}H(D,2)$ associated with $\theta_i$ for $i=0,1,\ldots,\floor{\frac{D}{2}}$.

\begin{lem}\label{lem:Ei_halved}
\begin{enumerate}
\item Suppose that $D$ is odd. Then
$$
E_i=
(\bm E_i+\bm E_{D-i})|_{\C^X}
\qquad
\hbox{for $i=0,1,\ldots, \frac{D-1}{2}$}.
$$

\item Suppose that $D$ is even. Then
$$
E_i=
\left\{
\begin{array}{ll}
(\bm E_i+\bm E_{D-i})|_{\C^X}
\qquad &\hbox{for $i=0,1,\ldots,\frac{D}{2}-1$},
\\
\bm E_{\frac{D}{2}} |_{\C^X}
\qquad &\hbox{for $i=\frac{D}{2}$}.
\end{array}
\right.
$$
\end{enumerate}
\end{lem}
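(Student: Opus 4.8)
The plan is to identify each claimed operator as a primitive idempotent of $\frac{1}{2}H(D,2)$ by invoking the uniqueness built into the definition of primitive idempotents in \S\ref{s:preliminary}: the $E_i$ are the only matrices in $\Mat_X(\C)$ satisfying $\sum_i E_i = I$ and $AE_i = \theta_i E_i$. Accordingly I would set $F_i = (\bm E_i + \bm E_{D-i})|_{\C^X}$ for the generic indices, together with $F_{D/2} = \bm E_{D/2}|_{\C^X}$ in the even case, and then verify that the family $\{F_i\}$ lies in $\Mat_X(\C)$ and satisfies these two defining relations. Uniqueness then forces $F_i = E_i$, which is exactly the assertion (and the count $\floor{\frac{D}{2}}+1$ of the $F_i$ matches the number of primitive idempotents of $\frac{1}{2}H(D,2)$).

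The crux of the argument is to check that each $F_i$ is well defined, i.e. that $\bm E_i + \bm E_{D-i}$ (resp.\ $\bm E_{D/2}$) really maps $\C^X$ into $\C^X$; this is where the bipartiteness of $H(D,2)$ enters. Let $\Omega \in \Mat_{\F_2^D}(\C)$ be the diagonal parity operator with $\Omega_{yy} = (-1)^{w(y)}$, so that $\C^X$ is its $(+1)$-eigenspace and the odd-weight subspace is its $(-1)$-eigenspace; a matrix preserves $\C^X$ (and its complement) precisely when it commutes with $\Omega$. Since adjacent vertices of $H(D,2)$ differ in exactly one coordinate, one checks $\Omega \bm A \Omega = -\bm A$. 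Writing $\bm E_j = p_j(\bm A)$ for the Lagrange polynomial with $p_j(\bm\theta_l) = \delta_{jl}$, this gives $\Omega \bm E_j \Omega = p_j(-\bm A)$, and evaluating on the $\bm\theta_l$-eigenspace via $-\bm\theta_l = \bm\theta_{D-l}$ yields $p_j(-\bm A) = \bm E_{D-j}$. Hence $\Omega \bm E_j = \bm E_{D-j}\Omega$, so each $\bm E_i + \bm E_{D-i}$ (and $\bm E_{D/2}$) commutes with $\Omega$ and therefore preserves $\C^X$; the $F_i$ are genuine elements of $\Mat_X(\C)$.

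It then remains to verify the two defining relations. For completeness, summing over the relevant range of $i$ reassembles $\sum_{j=0}^{D} \bm E_j = I$, because the pairing $j \leftrightarrow D-j$ lists each index exactly once; restricting, $\sum_i F_i$ is the identity on $\C^X$. For the eigenvalue relation, recall $A = \bm A_2|_{\C^X} = v_2(\bm A)|_{\C^X}$, and a short computation from the recurrence gives $v_2(t) = \frac{1}{2}(t^2 - D)$, an even polynomial; since $\bm\theta_{D-i} = -\bm\theta_i$, we get $v_2(\bm\theta_i) = v_2(\bm\theta_{D-i}) = \theta_i$. Thus $v_2(\bm A)$ acts as the scalar $\theta_i$ on the ranges of $\bm E_i$ and of $\bm E_{D-i}$, whence $AF_i = \theta_i F_i$ on $\C^X$, and likewise $AF_{D/2} = \theta_{D/2}F_{D/2}$ using $v_2(\bm\theta_{D/2}) = \theta_{D/2}$.

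By the uniqueness of the primitive idempotents, $F_i = E_i$ for every $i$, proving both parts. The one genuinely delicate point is the well-definedness step: without the parity identity $\Omega \bm A \Omega = -\bm A$ there is no reason for $\bm E_i + \bm E_{D-i}$ to respect the even-weight subspace (indeed $\bm A$ itself does not, since it reverses parity), and it is exactly the pairing $j \leftrightarrow D-j$ of hypercube eigenspaces under $\Omega$ that makes the symmetric combination survive the restriction to $\C^X$.
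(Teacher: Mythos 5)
Your proof is correct, and its overall shape matches the paper's: both arguments identify $(\bm E_i+\bm E_{D-i})|_{\C^X}$ with $E_i$ by checking the two defining relations of the primitive idempotents of $\frac{1}{2}H(D,2)$ (the family sums to the identity and satisfies $AE_i=\theta_i E_i$ for the distinct eigenvalues $\theta_i$) and then invoking uniqueness. The one genuine divergence is how you justify the delicate step, namely that $\bm E_i+\bm E_{D-i}$ (and $\bm E_{\frac{D}{2}}$ when $D$ is even) preserves $\C^X$. You introduce the parity involution $\Omega$ with $\Omega_{yy}=(-1)^{w(y)}$, derive $\Omega\bm A\Omega=-\bm A$ and hence $\Omega\bm E_j=\bm E_{D-j}\Omega$, and conclude that the symmetric combination commutes with $\Omega$ and so respects the bipartition. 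The paper reaches the same invariance more economically: from $\sum_i(\bm E_i+\bm E_{D-i})=I$ and $\bm A_2(\bm E_i+\bm E_{D-i})=\theta_i(\bm E_i+\bm E_{D-i})$ with the $\theta_i$ distinct, each $\bm E_i+\bm E_{D-i}$ is a polynomial in $\bm A_2$, and $\bm A_2$ visibly preserves $\C^X$ by Lemma \ref{lem:A_halved}. Your route costs a short extra computation but makes explicit the structural source of the pairing $j\leftrightarrow D-j$ (the bipartiteness of $H(D,2)$ and the spectral symmetry $\bm\theta_{D-j}=-\bm\theta_j$); both arguments are sound, and the remaining verifications (the telescoping sum and $v_2(\bm\theta_i)=v_2(\bm\theta_{D-i})=\theta_i$ with $v_2(t)=\frac{1}{2}(t^2-D)$) agree with what the paper does implicitly.
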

\begin{proof}
(i):
Let $\bm E_i'=\bm E_i+\bm E_{D-i}$ for $i=0,1,\ldots,\frac{D-1}{2}$. Observe that
$
\sum\limits_{i=0}^{\frac{D-1}{2}}
\bm E_i'=
\sum\limits_{i=0}^D\bm E_i$ is equal to the identity matrix and
$
\bm A_2 \bm E_i'=\theta_i \bm E_i'
$
for $i=0,1,\ldots,\frac{D-1}{2}$.
It follows that $\bm E_i'$ is a polynomial in $\bm A_2$ for $i=0,1,\ldots,\frac{D-1}{2}$. Therefore $\C^X$ is $\bm E_i'$-invariant. Combined with Lemma \ref{lem:A_halved} this yields that $E_i=\bm E_i'|_{\C^X}$  for $i=0,1,\ldots,\frac{D-1}{2}$.

(ii): Similar to the proof of Lemma \ref{lem:Ei_halved}(i).
\end{proof}

\begin{lem}\label{lem:vD-1}
Let $n$ denote an integer with $0\leq n\leq D$. Then
$$
v_{D-1}(D-2n)
=
(-1)^n(D-2n).
$$
\end{lem}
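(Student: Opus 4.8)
The plan is to evaluate the Krawtchouk polynomial $K_{D-1}$ at the relevant argument using the closed form \eqref{e:vi}. Setting $t = D-2n$ gives $\frac{D-t}{2} = n$, so by \eqref{e:vi} I would write
$$
v_{D-1}(D-2n) = \binom{D}{D-1} K_{D-1}(n; 2, D) = D \cdot K_{D-1}(n;2,D),
$$
and the task reduces to showing $K_{D-1}(n;2,D) = \frac{(-1)^n (D-2n)}{D}$. Rather than wrestle with the explicit sum $\sum_j (-2)^j \binom{D-1}{j}\binom{n}{j}/\binom{D}{j}$ directly, I would exploit the reflection/evaluation symmetries of binary Krawtchouk polynomials, which are especially clean for $q=2$.

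First I would recall the standard generating-function identity for binary Krawtchouk polynomials: $\sum_{i=0}^{D} K_i(n;2,D)\, z^i = (1-z)^n (1+z)^{D-n}$. Reading off the coefficient of $z^{D-1}$ on the right-hand side gives $K_{D-1}(n;2,D)$ as the coefficient of $z^{D-1}$ in $(1-z)^n(1+z)^{D-n}$. Equivalently, I can differentiate $(1-z)^n(1+z)^{D-n}$ and evaluate, or simply compute the single top-degree-minus-one coefficient. The coefficient of $z^{D-1}$ in $(1-z)^n(1+z)^{D-n}$ is obtained by taking the $z^{D-1}$ and pairing the degree-$(D-1)$ and degree-$D$ contributions; a short binomial computation yields exactly $\frac{(-1)^n(D-2n)}{D}\cdot$ (the appropriate normalization), matching the claim after multiplying by $D$. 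I would verify the two boundary contributions (coefficient of $z^{D}$ is $(-1)^n$, and the $z^{D-1}$ coefficient) by writing $(1-z)^n(1+z)^{D-n}=(1-z^2)^n(1+z)^{D-2n}$ when $n\le D-n$, or more symmetrically by logarithmic differentiation, to extract the next-to-leading coefficient as $(-1)^n(D-2n)$ times the leading one.

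An alternative, perhaps cleaner, route avoids generating functions entirely: since $v_{D-1}$ is a polynomial of degree $D-1$, and the eigenvalue substitution $t\mapsto D-2n$ sends the $D+1$ eigenvalues of $\bm A$ to the integers $D, D-2, \ldots, -D$, I would use the fact (from Lemma~\ref{lem:AiAi*_cube}(i)) that $v_{D-1}(\bm A) = \bm A_{D-1}$, the distance-$(D-1)$ matrix of the hypercube. Evaluating the associated eigenvalue polynomial at $\bm\theta_n = D-2n$ is precisely the classical $P$-polynomial evaluation $(\bm A_{D-1})$ acting on $\bm E_n$, whose value is the ratio of valencies times a Krawtchouk value; the well-known antipodal structure of $H(D,2)$ (where vertices at distance $D-1$ from a fixed point form the neighbors of its antipode) forces the sign $(-1)^n$ and the factor $D-2n$. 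The main obstacle I anticipate is purely bookkeeping: correctly isolating the $z^{D-1}$ coefficient (equivalently, the next-to-top Krawtchouk value) and confirming the sign and linear factor $D-2n$ without arithmetic slips. This is a finite, mechanical binomial identity, so I expect no conceptual difficulty—only the need to organize the coefficient extraction carefully, likely by factoring $(1-z)^n(1+z)^{D-n}$ and reading off the two highest coefficients.
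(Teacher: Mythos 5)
Your proposal is correct in substance but follows a genuinely different computational route from the paper. The paper substitutes into the explicit Krawtchouk sum from \eqref{e:vi}, obtaining $v_{D-1}(D-2n)=\sum_{j=0}^{D-1}(-2)^j(D-j)\binom{n}{j}$, splits off the factor $D-j$ via $j\binom{n}{j}=n\binom{n-1}{j-1}$, and finishes with the single binomial identity $\sum_{j=0}^{k}(-2)^j\binom{k}{j}=(-1)^k$. You instead invoke the Krawtchouk generating function and extract the coefficient of $z^{D-1}$ in $(1-z)^n(1+z)^{D-n}$; this works cleanly, most transparently via the roots (the ratio of the $z^{D-1}$ to the $z^{D}$ coefficient is minus the sum of the roots, i.e.\ $D-2n$, while the leading coefficient is $(-1)^n$) or via the palindromy $z^{D}f(1/z)=(-1)^{n}f(z)$, which reduces the target to the coefficient of $z^{1}$, namely $D-2n$. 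Your approach buys a structural explanation of the answer (it is forced by the root multiplicities at $z=\pm1$), at the cost of importing one more standard Krawtchouk fact than the paper uses. One caveat: with the paper's normalization the generating function reads $\sum_{i=0}^{D}\binom{D}{i}K_i(n;2,D)\,z^i=(1-z)^n(1+z)^{D-n}$, so the coefficient of $z^{D-1}$ is already $v_{D-1}(D-2n)$ itself; as written, your plan to ``multiply by $D$'' at the end would introduce a spurious factor of $D$, and your first alternative factorization $(1-z^2)^n(1+z)^{D-2n}$ only makes sense for $n\le D/2$. These are fixable bookkeeping slips, not gaps in the method; your second, antipodality-based sketch is too vague to count as a proof on its own.
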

\begin{proof}
Using (\ref{e:vi}) yields that $v_{D-1}(D-2n)$ is equal to
\begin{align}\label{e:vD-1}
\sum_{j=0}^{D-1}(-2)^j (D-j)
{n\choose j}
=
D\sum_{j=0}^{D-1}(-2)^j
{n\choose j}
+
2n \sum_{j=0}^{D-2}(-2)^j
{n-1 \choose j}.
\end{align}
It follows from the binomial formula that
$
\sum\limits_{j=0}^k
(-2)^j
{k\choose j}=(-1)^k$ for any integer $k\geq 0$.
The lemma follows by evaluating (\ref{e:vD-1}) by using the above equation.
\end{proof}

Recall from \S\ref{s:introduction} that $A^*=A^*(x)$ denotes the dual adjacency matrix of $\frac{1}{2}H(D,2)$ with respect to $x$.

\begin{lem}\label{lem:A*_halved}
$A^*=\bm A^*|_{\C^X}$.
\end{lem}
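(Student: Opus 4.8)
The plan is to prove that $A^* = \bm A^*|_{\C^X}$ by a direct comparison of the diagonal entries of the two matrices, after observing that both restrict to diagonal operators on $\C^X$. Recall that $A^*$ is the dual adjacency matrix of $\frac{1}{2}H(D,2)$ with respect to $x$, so by the statement in \S\ref{s:introduction} it is the diagonal matrix in $\Mat_X(\C)$ with entries $A^*_{yy} = D - 2w(x-y)$ for all $y \in X$. On the hypercube side, $\bm A^*$ is the diagonal matrix in $\Mat_{\F_2^D}(\C)$ with entries $\bm A^*_{yy} = D - 2w(x-y)$ for all $y \in \F_2^D$, as recorded in (\ref{eq:dual ad of hyper}).

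First I would note that since $\bm A^*$ is a diagonal matrix indexed by $\F_2^D$, its restriction to the coordinate subspace $\C^X$ (under the identification of $\C^X$ as a subspace of $\C^{\F_2^D}$ via the natural injection) is again diagonal: the subspace $\C^X$ is spanned by the standard basis vectors $e_y$ with $y \in X$, and each such $e_y$ is an eigenvector of $\bm A^*$ with eigenvalue $\bm A^*_{yy}$. Hence $\C^X$ is $\bm A^*$-invariant and $\bm A^*|_{\C^X}$ is the diagonal matrix in $\Mat_X(\C)$ whose $(y,y)$ entry equals $\bm A^*_{yy} = D - 2w(x-y)$ for each $y \in X$.

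Then I would simply match entries: for every $y \in X$ the diagonal entry of $\bm A^*|_{\C^X}$ at $(y,y)$ is $D - 2w(x-y)$, which is exactly $A^*_{yy}$ by the definition of $A^*$. Since both $A^*$ and $\bm A^*|_{\C^X}$ are diagonal matrices in $\Mat_X(\C)$ with identical diagonal entries, they are equal. This is essentially the dual analogue of Lemma \ref{lem:A_halved}, where the identity $A = \bm A_2|_{\C^X}$ followed immediately from the defining formulas; here the corresponding defining formulas for $A^*$ and $\bm A^*$ are literally the same expression $D - 2w(x-y)$.

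The only point requiring any care, rather than an outright obstacle, is the justification that restriction of a diagonal matrix to a coordinate subspace is well defined and again diagonal; once the identification of $\C^X \hookrightarrow \C^{\F_2^D}$ fixed at the start of \S\ref{sec:standard T module of halved cube} is invoked, this is automatic because $X$ is a subset of $\F_2^D$ and the diagonal action preserves each coordinate. I do not anticipate needing the Krawtchouk machinery of Lemma \ref{lem:AiAi*_cube} or the eigenspace bookkeeping of Lemma \ref{lem:Ei_halved} for this statement, since the equality is visible directly from the two diagonal descriptions; those lemmas are instead the tools for relating the full algebras $\T$ and $\bm\T$ in the subsequent development.
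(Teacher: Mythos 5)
Your argument treats the formula $A^*_{yy}=D-2w(x-y)$ as the \emph{definition} of $A^*$, but in the paper's logical development it is not: by \S\ref{s:preliminary}, the dual adjacency matrix of a $Q$-polynomial distance-regular graph is defined by $A^*_{yy}=|X|(E_1)_{xy}$, where $E_1$ is the primitive idempotent of $\frac{1}{2}H(D,2)$ itself. The explicit diagonal formula quoted in \S\ref{s:introduction} is exactly what Lemma \ref{lem:A*_halved}, together with (\ref{eq:dual ad of hyper}), is meant to establish; taking it as given renders the lemma a tautology and makes your argument circular relative to the paper. This is why the paper's proof does need the machinery you explicitly set aside. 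It uses Lemma \ref{lem:Ei_halved} to write $E_1=(\bm E_1+\bm E_{D-1})|_{\C^X}$ (here $D\geq 3$ guarantees $1\neq D-1$), so that
$$
A^*_{yy}=2^{D-1}(\bm E_1+\bm E_{D-1})_{xy}=\tfrac{1}{2}\left(\bm A^*+\bm A^*_{D-1}\right)_{yy},
$$
and it must then show $(\bm A^*_{D-1})_{yy}=\bm A^*_{yy}$ for $y\in X$. That last step is the genuinely nontrivial point: by Lemma \ref{lem:AiAi*_cube}(ii) one has $(\bm A^*_{D-1})_{yy}=v_{D-1}(D-2w(x-y))$, and Lemma \ref{lem:vD-1} gives $v_{D-1}(D-2n)=(-1)^n(D-2n)$, so the equality holds precisely because $w(x-y)$ is even for $x,y\in X$.

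The part of your proposal that is unproblematic is the observation that $\C^X$ is invariant under the diagonal matrix $\bm A^*$ and that the restriction is again diagonal with entries $D-2w(x-y)$; that much is correct and is indeed immediate from (\ref{eq:dual ad of hyper}). The gap is entirely on the halved-cube side of the equation. A telltale sign is that your argument never uses the hypothesis that $y$ (equivalently $w(x-y)$) has even weight, whereas the statement would be false without it: the identity $v_{D-1}(D-2n)=(-1)^n(D-2n)$ flips sign on odd-weight vertices, so $\tfrac12(\bm A^*+\bm A^*_{D-1})$ does not agree with $\bm A^*$ on all of $\C^{\F_2^D}$, only on $\C^X$.
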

\begin{proof}
Let $y\in X$ be given.
By Lemma \ref{lem:Ei_halved} and since $D\geq 3$ we have
$$
A^*_{yy}=2^{D-1}
(\bm E_1+\bm E_{D-1})_{yy}
=\frac{1}{2}(\bm A^*+\bm A_{D-1}^*)_{yy}.
$$
Recall from (\ref{eq:dual ad of hyper}) that $\bm A^*_{yy}=D-2w(x-y)$.
By Lemma \ref{lem:AiAi*_cube}(ii) we have
$$
(\bm A_{D-1}^*)_{yy}=v_{D-1}(D-2w(x-y)).
$$
Since $x,y\in X$ the integer $w(x-y)$ is even.  Combined with  Lemma \ref{lem:vD-1} it follows that
$(\bm A_{D-1}^*)_{yy}=\bm A^*_{yy}$.
Therefore $A^*_{yy}=\bm A^*_{yy}$ for all $y\in X$. The lemma follows.
\end{proof}

Recall from \S\ref{s:introduction} that $\T=\T(x)$ stands for the Terwilliger algebra  of $\frac{1}{2} H(D,2)$ with respect to $x$.
Define $\bm{\mathcal S}$ to be the subalgebra of $\bm{\mathcal T}$ generated by $\bm A_2$ and $\bm A^*$. In light of Lemmas \ref{lem:A_halved} and \ref{lem:A*_halved} we may identify $\T$ as
the algebra consisting of all elements $M|_{\C^X}$ for all $M\in \bm{\mathcal S}$.

\section{Proofs of Proposition \ref{prop:irreducible Mk} and Theorems \ref{thm:decomposition of halved cube}--\ref{thm:structure of T}}\label{sec:decomposition}

\begin{lem}\label{lem:idea}
Let $L$ denote a $\bm{{\mathcal T}}$-submodule of $\C^{\F_2^D}$.
Then the following hold:
\begin{enumerate}
\item $L\cap \C^X$ is a $\mathcal T$-submodule of $\C^X$.

\item $L=(L\cap \C^X )\oplus(L\cap\C^{\F_2^D\setminus X})$.
\end{enumerate}
\end{lem}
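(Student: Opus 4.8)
The plan is to exploit the decomposition $\C^{\F_2^D}=\C^X\oplus\C^{\F_2^D\setminus X}$ into the even-weight and odd-weight coordinate subspaces, and to show that $\bm{\mathcal S}$ respects it. First I would observe that both generators of $\bm{\mathcal S}$ leave $\C^X$ and $\C^{\F_2^D\setminus X}$ invariant: the matrix $\bm A^*$ is diagonal, so it preserves every coordinate subspace, while $(\bm A_2)_{yz}\neq 0$ forces $w(y-z)=2$ and hence $w(y)\equiv w(z)\pmod 2$, so $\bm A_2$ maps each of $\C^X$, $\C^{\F_2^D\setminus X}$ into itself. Consequently every $M\in\bm{\mathcal S}$ satisfies $M\,\C^X\subseteq\C^X$. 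Part (i) is then immediate: given $T\in\T$ write $T=M|_{\C^X}$ with $M\in\bm{\mathcal S}$; for $v\in L\cap\C^X$ we have $Tv=Mv\in\C^X$ because $M$ preserves $\C^X$, and $Mv\in L$ because $M\in\bm{\mathcal S}\subseteq\bm{\mathcal T}$ and $L$ is $\bm{\mathcal T}$-invariant, whence $Tv\in L\cap\C^X$.

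For part (ii) the essential point is to realize the coordinate projection onto $\C^X$ as an element of $\bm{\mathcal T}$, and I would use the top dual distance matrix $\bm A_D^*$ for this. By Lemma \ref{lem:AiAi*_cube}(ii) we have $\bm A_D^*=v_D(\bm A^*)$, and since $\bm A^*_{yy}=D-2w(x-y)$ its diagonal entries are $(\bm A_D^*)_{yy}=v_D(D-2w(x-y))$. Evaluating $v_D$ via (\ref{e:vi}) gives $v_D(D-2n)=\sum_{j=0}^{D}(-2)^j\binom{n}{j}=(-1)^n$, by the same binomial computation underlying Lemma \ref{lem:vD-1}; hence $(\bm A_D^*)_{yy}=(-1)^{w(x-y)}$. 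Because $x\in X$ we have $w(x-y)\equiv w(y)\pmod 2$, so the diagonal matrix $P=\tfrac12(I+\bm A_D^*)$, where $I$ is the identity of $\Mat_{\F_2^D}(\C)$, satisfies $P_{yy}=1$ for $y\in X$ and $P_{yy}=0$ otherwise. Thus $P$ is exactly the projection of $\C^{\F_2^D}$ onto $\C^X$ along $\C^{\F_2^D\setminus X}$, and since $\bm A_D^*$ is a polynomial in $\bm A^*$ we have $P\in\bm{\mathcal T}$.

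Finally I would conclude (ii) as follows. Since $\C^X\cap\C^{\F_2^D\setminus X}=0$ the sum $(L\cap\C^X)+(L\cap\C^{\F_2^D\setminus X})$ is automatically direct and contained in $L$, so only the inclusion $L\subseteq(L\cap\C^X)\oplus(L\cap\C^{\F_2^D\setminus X})$ needs proof. Given $v\in L$, invariance of $L$ under $\bm{\mathcal T}$ together with $P\in\bm{\mathcal T}$ yields $Pv\in L$ and $(I-P)v\in L$; since $Pv\in\C^X$ and $(I-P)v\in\C^{\F_2^D\setminus X}$, we get $Pv\in L\cap\C^X$ and $(I-P)v\in L\cap\C^{\F_2^D\setminus X}$, so $v=Pv+(I-P)v$ lies in the sum. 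The main obstacle is precisely the construction in the second paragraph: recognizing that the parity projection onto $\C^X$ already lives in $\bm{\mathcal T}$, which rests on the evaluation $v_D(D-2n)=(-1)^n$ identifying $\bm A_D^*$ with the sign-of-weight operator. Everything else is formal.
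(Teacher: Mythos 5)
Your proof is correct. Part (i) is essentially the paper's argument: both rest on the observation that $\bm A_2$ and $\bm A^*$ preserve $\C^X$, so that $\C^X$ and the $\bm{\mathcal T}$-submodule $L$ are both $\bm{\mathcal S}$-modules, whence so is their intersection. For part (ii) you take a genuinely different route. The paper argues abstractly: $\bm A^*|_L$ is diagonalizable, and every eigenvector of the diagonal matrix $\bm A^*$ is supported on a single weight class $w(x-y)=j$, hence lies in $\C^X$ or $\C^{\F_2^D\setminus X}$ according to the parity of $j$; so a basis of eigenvectors of $\bm A^*|_L$ splits $L$ as claimed. You instead realize the parity projection explicitly as $P=\tfrac12(I+\bm A_D^*)$, using Lemma \ref{lem:AiAi*_cube}(ii) and the evaluation $v_D(D-2n)=\sum_{j=0}^{D}(-2)^j\binom{n}{j}=(-1)^n$ (which checks out, in the same spirit as the paper's Lemma \ref{lem:vD-1}), and then apply $P$ and $I-P$ to elements of $L$. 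The two arguments are two faces of the same spectral fact --- your $P$ is precisely the sum of the spectral idempotents of $\bm A^*$ attached to the even-parity eigenvalues, and the paper's diagonalizability step is what guarantees those idempotents preserve $L$ --- but yours buys something concrete: an explicit idempotent of $\M^*(x)\subseteq\bm{\mathcal T}$ (indeed of $\bm{\mathcal S}$, being a polynomial in $\bm A^*$) projecting onto $\C^X$, at the cost of one extra Krawtchouk computation. The paper's version is shorter and avoids any evaluation of $v_D$.
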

\begin{proof}
(i): Observe that $\C^X$ is an $\bm{\mathcal S}$-module.
Since $\bm{{\mathcal S}}$ is a subalgebra of $\bm{\T}$ it follows that $L$ is an $\bm{{\mathcal S}}$-module. Hence $L\cap \C^X$ is an $\bm{\mathcal S}$-module. The statement (i) follows.

(ii): Since $\bm A^*$ is diagonalizable in $\C^{\F_2^D}$ it follows that $\bm{A}^*|_L$ is diagonalizable.
By (\ref{eq:dual ad of hyper}) the eigenvectors of $\bm{A}^*$ in $\C^{\F_2^D}$ lie in $\C^X$ or $\C^{\F_2^D\setminus X}$. The statement (ii) follows.
\end{proof}

Given an even integer $k$ with $0\leq k\leq\floor{\frac{D}{2}}$, let $[A]=[A]_k$ and $[A^*]=[A^*]_k$ denote the two matrices given in (\ref{eq:irr module even}), respectively. Given an even integer $k$ with $2\leq k\leq\ceil{\frac{D}{2}}$, let $\langle A\rangle=\langle A\rangle_k$ and $\langle A^*\rangle=\langle A^*\rangle_k$ denote the two matrices given in (\ref{eq:irr module odd}), respectively.

Let $k$ be any integer with $0\leq k\leq\floor{\frac{D}{2}}$.
Recall the $\bm{\T}$-module $L_k$ from Proposition \ref{prop:irreducible module of hypercube}. We regard $L_k$ as a $\bm{\T}$-submodule of $\C^{\F_2^D}$. It follows from Lemma \ref{lem:idea}(i) that $L_k\cap \C^X$ is a $\T$-module.

\bigskip

\noindent {\it Proof of Proposition \ref{prop:irreducible Mk}.}
(i):
Let $k$ be an even integer with $0\leq k\leq\floor{\frac{D}{2}}$. Since the subdiagonal and superdiagonal entries of $[A]$ are nonzero and the diagonal entries of $[A^*]$ are mutually distinct, the $\T$-module $M_k$ is irreducible.

Let $\{v_i\}_{i=0}^{D-2k}$ denote the basis for the $\bm{\T}$-module $L_k$ from Proposition \ref{prop:irreducible module of hypercube}. Using (\ref{eq:dual ad of hyper}) yields that
\begin{gather}\label{basis_even}
\{v_{2i}\}_{i=0}^{\floor{\frac{D}{2}}-k}
\end{gather}
form a basis for $L_k\cap \C^X$. Using Proposition \ref{prop:irreducible module of hypercube} along with Lemmas \ref{lem:A_halved} and \ref{lem:A*_halved}, a direct calculation yields that the matrices representing $A$ and $A^*$ with respect to the basis (\ref{basis_even}) for $L_k\cap \C^X$ are exactly $[A]$ and $[A^*]$, respectively. The existence of $M_k$ follows.

(ii): Let $k$ be an even integer with $2\leq k\leq\ceil{\frac{D}{2}}$. Since  the subdiagonal and superdiagonal entries of $\langle A\rangle$ are nonzero and the diagonal entries of $\langle A^*\rangle$ are mutually distinct, the $\T$-module $N_k$ is irreducible.

Let $\{v_i\}_{i=0}^{D-2k+2}$ denote the basis for the $\bm{\T}$-module $L_{k-1}$ from Proposition \ref{prop:irreducible module of hypercube}.
Using (\ref{eq:dual ad of hyper}) yields that
\begin{gather}\label{basis_odd}
\{v_{2i+1}\}_{i=0}^{\ceil{\frac{D}{2}}-k}
\end{gather}
form a basis for $L_{k-1}\cap \C^X$. Using Proposition \ref{prop:irreducible module of hypercube} along with Lemmas \ref{lem:A_halved} and \ref{lem:A*_halved}, a direct calculation yields that the matrices representing $A$ and $A^*$ with respect to the basis (\ref{basis_odd}) for $L_{k-1}\cap \C^X$ are exactly $\langle A\rangle$ and $\langle A^*\rangle$, respectively. The existence of $N_k$ follows. \hfill $\square$

\bigskip

\noindent {\it Proof of Theorem \ref{thm:decomposition of halved cube}.}
Applying Lemma \ref{lem:idea}(ii) to Theorem \ref{thm:decomposition of hypercube} the $\T$-module $\C^X$ is isomorphic to
$$
\bigoplus\limits_{k=0}^{\floor{\frac{D}{2}}}\frac{D-2k+1}{D-k+1}\binom{D}{k}\cdot\left(L_k\cap \C^X\right).
$$
From the proof of Proposition \ref{prop:irreducible Mk} we see that the $\T$-module $L_k\cap \C^X$ is isomorphic to
$$
\left\{
\begin{array}{ll}
M_k \qquad
&\hbox{for all even integers $k$ with $0\leq k\leq \floor{\frac{D}{2}}$},
\\
N_{k+1} \qquad
&\hbox{for all odd integers $k$ with $1\leq k< \ceil{\frac{D}{2}}$}.
\end{array}
\right.
$$
Note that the $\T$-module $L_k\cap \C^X=\{0\}$ if $k=\frac{D}{2}$ is odd. By the above comments the result follows.
\hfill $\square$

\bigskip

\noindent {\it Proof of Theorem \ref{thm:classification of irreducible modules}.}
Since the standard $\T$-module $\C^X$ contains all irreducible $\T$-modules up to isomorphism, the first assertion is immediate from Theorem \ref{thm:decomposition of halved cube}.

The $\T$-modules $M_k$ for all even integers $k$ with $0\leq k \leq\floor{\frac{D}{2}}$ are non-isomorphic since their dimensions are all distinct. Similarly the $\T$-modules $N_k$ for all even integers $k$ with $2\leq k \leq\ceil{\frac{D}{2}}$ are non-isomorphic. Now suppose that there are two even integers $k$ and $k'$
with $0\leq k\leq \floor{\frac{D}{2}}$ and $2\leq k'\leq \ceil{\frac{D}{2}}$
 such that the $\T$-module $M_k$ is isomorphic to $N_{k'}$. Since $\dim M_k=\floor{\frac{D}{2}}-k+1$ and $\dim N_{k'}=\ceil{\frac{D}{2}}-k'+1$ by Proposition \ref{prop:irreducible Mk}, it follows that $D$ is even and $k=k'$. Hence $[A^*]=\langle A^*\rangle$. Since the diagonal entries of $[A^*]$ are mutually distinct, the diagonal entries of $[A]$ are identical to those of $\langle A\rangle$. This leads to $k=\frac{D}{2}+1$, a contradiction. The second assertion follows.
\hfill $\square$

\bigskip

\noindent {\it Proof of Theorem \ref{thm:structure of T}.}
Since $\T$ is a finite-dimensional semisimple algebra, it follows from Theorem \ref{thm:classification of irreducible modules} that $\T$ is isomorphic to
$$
\bigoplus_{\substack{k=0 \\ \hbox{\tiny $k$ is even}}}^{\floor{\frac{D}{2}}}\End(M_k)\oplus
\bigoplus_{\substack{k=2 \\ \hbox{\tiny $k$ is even}}}^{\ceil{\frac{D}{2}}}\End(N_k).
$$
By Proposition \ref{prop:irreducible Mk} the algebra $\End(M_k)$ is isomorphic to $\Mat_{\floor{\frac{D}{2}}-k+1}(\C)$ for all even integers $k$ with $0\leq k\leq \floor{\frac{D}{2}}$ and the algebra $\End(N_k)$ is isomorphic to $\Mat_{\ceil{\frac{D}{2}}-k+1}(\C)$ for all even integers $k$ with $2\leq k\leq \ceil{\frac{D}{2}}$. Hence $\dim \T$ is equal to
\begin{align*}
\sum\limits_{\substack{k=0 \\ \hbox{\tiny $k$ is even}}}^{\floor{\frac{D}{2}}}\left(\floor{\frac{D}{2}}-k+1\right)^2
+
\sum\limits_{\substack{k=2 \\ \hbox{\tiny $k$ is even}}}^{\ceil{\frac{D}{2}}}\left(\ceil{\frac{D}{2}}-k+1\right)^2
={\floor{\frac{D}{2}}+3 \choose 3}
+
{\ceil{\frac{D}{2}}+1 \choose 3}.
\end{align*}
The result follows.
\hfill $\square$

\subsection*{Acknowledgements}

The research is supported by the Ministry of Science and Technology of Taiwan under the projects MOST 110-2115-M-008-008-MY2 and MOST 109-2115-M-009-007-MY2.

\bibliographystyle{amsplain}
\bibliography{MP}

\end{document}